\theoremstyle{plain} 
 \newtheorem{thm}{Theorem}[section]
 \newtheorem{lem}[thm]{Lemma}
 \newtheorem{cor}[thm]{Corollary}
\theoremstyle{definition}
\theoremstyle{remark}
  \newtheorem{rem}[thm]{Remark}
\newcommand{\N}{\mathbb{N}}
\newcommand{\Z}{\mathbb{Z}}
\newcommand{\calr}{\mathcal{R}}
\newcommand{\calg}{\mathcal{G}}
\newcommand{\cals}{\mathcal{S}}
\newcommand{\cali}{\mathcal{I}}
\newcommand{\calt}{\mathcal{T}}
\newcommand{\calv}{\mathcal{V}}
\newcommand{\fs}{\mathfrak{s}}
\newcommand{\gr}{\mathrm{graph}}
\newcommand{\ls}{\leqslant}
\newcommand{\relmiddle}[1]{\mathrel{}\middle#1\mathrel{}}
\renewcommand{\c}{\curvearrowright}
\begin{document}

\title{Stable actions and central extensions}
\author{Yoshikata Kida}
\address{Graduate School of Mathematical Sciences, the University of Tokyo, 3-8-1 Komaba, Tokyo 153-8914, Japan}
\email{kida@ms.u-tokyo.ac.jp}
\date{March 7, 2017}
\thanks{The author was supported by JSPS Grant-in-Aid for Scientific Research, No.25800063}

\begin{abstract}
A probability-measure-preserving action of a countable group is called stable if its transformation-groupoid absorbs the ergodic hyperfinite equivalence relation of type $\textrm{II}_1$ under direct product.
We show that for a countable group $G$ and its central subgroup $C$, if $G/C$ has a stable action, then so does $G$.
Combining a previous result of the author, we obtain a characterization of a central extension having a stable action.
\end{abstract}

\maketitle


\section{Introduction}\label{sec-intro}

In his seminal paper \cite{dye1}, Dye established fundamentals of orbit equivalence relations for group actions on probability measure spaces.
Among other things, he showed uniqueness of the ergodic hyperfinite equivalence relation of type $\textrm{II}_1$, denoted by $\calr_0$.
Jones-Schmidt \cite{js} characterized when a discrete measured equivalence relation $\calr$ is \textit{stable}, that is, we have an isomorphism $\calr \times \calr_0\simeq \calr$, in terms of  asymptotically central sequences in the full group $[\calr]$, following the counterpart in the von Neumann algebra setting (\cite{mc}, \cite{connes}).
They discussed the problem asking which countable group has an ergodic, free and probability-measure-preserving (p.m.p.)\ action whose orbit equivalence relation is stable.
We call such groups \textit{stable}.
It is shown that any stable group is inner amenable (\cite[Proposition 4.1]{js}).
Recent progress around this problem are found in \cite{dv}, \cite{kec}, \cite{kida-inn}--\cite{kida-sth}, \cite{pv} and \cite{td}.
Remarkably, Tucker-Drob \cite{td} clarified structure of inner amenable groups satisfying a certain minimal condition on centralizers.
All linear groups satisfy this condition.
As its consequence, he characterized stable groups among such inner amenable groups.

Countable groups with infinite center are a typical example of inner amenable groups.
In \cite{kida-srt}, toward characterizing when a central group-extension is stable, the author obtained some necessary and sufficient conditions, which involve relative property (T) of the central subgroup.
The remaining question was the following:
Is any central extension of a stable group stable?
The aim of this paper is to answer it affirmatively through the following:

\begin{thm}\label{thm-ce}
Let $1\to C\to G\stackrel{q}{\to}\Gamma \to 1$ be an exact sequence of countable groups such that $C$ is central in $G$.
Let $\Gamma \c (X, \mu)$ be an ergodic, free, p.m.p.\ and stable action.
Then there exists an ergodic p.m.p.\ extension $\Gamma \c (Z, \zeta)$ of the action $\Gamma \c (X, \mu)$ such that if $G$ acts on $(Z, \zeta)$ through the map $q$, then the transformation-groupoid $G\ltimes (Z, \zeta)$ is stable.
\end{thm}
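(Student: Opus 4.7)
My plan is to invoke a Jones--Schmidt-type characterization of stability for (possibly non-free) p.m.p.\ discrete measured groupoids: $G\ltimes(Z,\zeta)$ is stable if and only if its full group $[G\ltimes Z]$ admits an asymptotically central, asymptotically ``free'' sequence, equivalently an asymptotically invariant non-trivial sequence of Borel subsets of $Z$. The data we have are a stability witness $(\phi_n)\subset[\Gamma\ltimes X]$ and Borel sets $B_n\subset X$ coming from stability of $\Gamma\c(X,\mu)$, together with a Borel section $s:\Gamma\to G$ of $q$ and its associated $2$-cocycle $\sigma:\Gamma\times\Gamma\to C$. The task is to promote these into a stability witness for $G\ltimes Z$ by constructing a suitable ergodic extension $(Z,\zeta)\to(X,\mu)$.

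The first step is to construct $(Z,\zeta)$. I would take it as an ergodic joining of $(X,\mu)$ with an auxiliary ergodic $\Gamma$-space chosen to be compatible with $\sigma$; natural candidates are a Bernoulli or Gaussian $\Gamma$-action, or a skew product $X\times\widehat C$ twisted by $\sigma$. The extension must (i) preserve ergodicity, (ii) allow $B_n$ to pull back along the factor map $\pi:Z\to X$ to an asymptotically invariant non-trivial sequence on $Z$, and (iii) admit a natural lift of $(\phi_n)$ to a sequence $(\tilde\phi_n)\subset[\Gamma\ltimes Z]$ whose asymptotic centrality propagates from $[\Gamma\ltimes X]$ to all of $[\Gamma\ltimes Z]$. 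Item (iii) is where the auxiliary space must have enough ``mixing'' relative to $X$ to kill approximate centralizers of new bisections introduced by the extension.

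The main obstacle is then to verify that $(\tilde\phi_n)$, viewed inside $[G\ltimes Z]$ via $s$, remains asymptotically central there. The product in $[G\ltimes Z]$ differs from that in $[\Gamma\ltimes Z]$ by the twist $\sigma$, so the commutator of $\tilde\phi_n$ with a bisection labelled by $s(\gamma)$ picks up an extra $C$-valued defect involving $\sigma$. This is where the centrality of $C$ in $G$ is essential: the defect takes values in the central isotropy of $G\ltimes Z$, and provided $(Z,\zeta)$ has been chosen so that the relevant $C$-valued cocycles become asymptotically cohomologically trivial over $\Gamma\ltimes Z$, the defects can be made negligible. Controlling these $\sigma$-defects in the natural metric on $[G\ltimes Z]$ and deducing that the lifted sequence is a valid stability witness in $[G\ltimes Z]$ is the technical heart of the argument.
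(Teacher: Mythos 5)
Your high-level diagnosis is right: one wants to lift a non-trivial asymptotically central sequence from $\Gamma\ltimes X$ to $G\ltimes Z$ via the section $\mathsf{s}$, the obstruction is a $C$-valued defect coming from the $2$-cocycle $\sigma$, and centrality of $C$ is what makes the defect tractable. The Jones--Schmidt criterion for non-free groupoids is indeed the framework the paper uses. But the proposal stops exactly where the proof begins: everything hinges on actually constructing an extension on which the defect can be cancelled, and none of the candidates you name would do the job. A Bernoulli or Gaussian extension carries no information about $\sigma$ at all, and ``a skew product $X\times\widehat{C}$ twisted by $\sigma$'' is not well-defined as stated, since $\sigma$ is a $2$-cocycle $\Gamma\times\Gamma\to C$, not a $1$-cocycle into $\widehat{C}$. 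Moreover, the phrase ``the relevant $C$-valued cocycles become asymptotically cohomologically trivial over $\Gamma\ltimes Z$'' names the desired conclusion without supplying any mechanism; this is the technical heart you defer, and it is essentially the entire content of the argument.

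Concretely, the paper proceeds as follows. Writing $\calr=\calr_0\times\calr_1$ with $\calr_0$ hyperfinite realized on $\prod_\N\Z/2\Z$, the witness transformations are the coordinate flips $V_n$, and the defect of lifting $V_n$ against an element $h$ of $\calr$ is the map $\sigma_{k,n}$ measuring non-commutativity of the lifted square. The key observation (Lemma \ref{lem-skn}) is that $\sigma_{k,n}$ restricts to a genuine $1$-cocycle on the subrelation $\cals_{k,n}$ of elements commuting with $V_n$, and that $\cals_{k,n}$ exhausts $\cali_0^{>k}\times\calr^{\ls k}$ as $n\to\infty$; this step uses both freeness of $\Gamma\c X$ and the fact that the $2$-cocycle comes from a group extension (see Remark \ref{rem-2}), so it is not something one gets for free from a generic auxiliary system. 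One then approximates this $1$-cocycle by cocycles $\sigma_k^j$ depending only on the $X^{\ls k}$-coordinate, embeds $C$ densely in a compact group $K$, and takes $\Omega$ to be an infinite fibered product of actions of $\calr^{\ls k}$ \emph{co-induced} from the translation actions $\cals_k^j\c K$ defined by $\sigma_k^j$. The base coordinate of each co-induced factor supplies exactly the element of $C$ (after a fundamental-domain argument for the hyperfinite relation $C\c K$ when $C$ is infinite) by which one twists $\tilde{V}_{n(k)}$ to cancel the defect (Lemmas \ref{lem-comm} and \ref{lem-comm-2}). Note also that your item (iii) has the mechanism backwards: no mixing is invoked; rather, the tower is built so that $U_k$ fixes all coordinates of level $l<k$, and Lemma \ref{lem-cent} reduces asymptotic commutation to a generating set. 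Without the construction of $\Omega$ and the cocycle identity on $\cals_{k,n}$, the proposal is a correct statement of the problem rather than a proof.
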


We say that a discrete measured groupoid $\calg$ is \textit{stable} if $\calg \times \calr_0\simeq \calg$.
We say that a p.m.p.\ action $G\c (X, \mu)$ is \textit{stable} if the transformation-groupoid $G\ltimes (X, \mu)$ is stable.
By \cite[Theorem 1.4]{kida-srt}, for a countable group to be stable, it suffices that the group has a p.m.p.\ stable action.
Hence it follows from Theorem \ref{thm-ce} that any central extension of a stable group is stable.
We note that this result is non-trivial even when the central subgroup is finite.
Theorem \ref{thm-ce} is proved by weakening influence of the 2-cocycle of the central extension, through taking extensions of the action $\Gamma \c X$.
This enables us to lift some asymptotically central sequence in $\Gamma \ltimes X$ to an asymptotically central sequence in $G\ltimes Z$.
Combining Theorem \ref{thm-ce} with \cite[Theorem 1.1]{kida-srt}, we obtain the following:

\begin{cor}\label{cor-ce}
Let $G$ be a countable group and $C$ a central subgroup of $G$.
Then $G$ is stable if and only if either the pair $(G, C)$ does not have property (T) or $G/C$ is stable.
\end{cor}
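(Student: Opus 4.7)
The plan is to assemble Corollary \ref{cor-ce} directly from two ingredients: Theorem \ref{thm-ce} above and the necessary-and-sufficient conditions of \cite[Theorem 1.1]{kida-srt}. No new measure-theoretic or groupoid-theoretic argument should be required, since the role of Theorem \ref{thm-ce} is precisely to fill the one gap left open by the earlier paper.

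For the sufficiency direction, I would split on which disjunct is assumed. If $(G,C)$ fails to have property (T), then the sufficient half of \cite[Theorem 1.1]{kida-srt} already yields an ergodic, free, p.m.p.\ stable action of $G$, so $G$ is stable. If instead $G/C$ is stable, I would fix any ergodic, free, p.m.p.\ stable action $G/C\c(X,\mu)$, feed it into Theorem \ref{thm-ce} to obtain an ergodic p.m.p.\ extension $(Z,\zeta)$ on which $G$ acts via $q$ with stable transformation-groupoid $G\ltimes(Z,\zeta)$, and then invoke \cite[Theorem 1.4]{kida-srt} (recalled between Theorem \ref{thm-ce} and the corollary) to pass from the existence of a stable p.m.p.\ action to stability of $G$ itself.

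For the necessity direction, I would assume $G$ is stable. If $(G,C)$ does not have property (T) the first disjunct of the corollary is met and there is nothing to prove. Otherwise, the necessary half of \cite[Theorem 1.1]{kida-srt} applies: in the presence of property (T) of $(G,C)$, stability of $G$ is shown there to force stability of $G/C$, giving the second disjunct.

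The only substantive step is the appeal to Theorem \ref{thm-ce}, which supplies exactly the implication left open by \cite{kida-srt}, namely that stability of $G/C$ propagates upward to $G$ without any hypothesis on property (T). Once that is in place the rest is purely logical bookkeeping, and the only care needed is to match the hypotheses of the two halves of \cite[Theorem 1.1]{kida-srt} to the two disjuncts of the corollary; I do not anticipate any further obstacle.
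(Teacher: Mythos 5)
Your proposal is correct and matches the paper's intended argument exactly: the paper derives the corollary by combining Theorem \ref{thm-ce} (for the implication that stability of $G/C$ lifts to $G$, via \cite[Theorem 1.4]{kida-srt} to pass from a stable p.m.p.\ action to stability of the group) with the two halves of \cite[Theorem 1.1]{kida-srt} for the remaining implications. The case split and bookkeeping you describe are precisely what the paper leaves implicit.
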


\begin{rem}
Let $\calr$ be a discrete measured equivalence relation on a standard probability space $(X, \mu)$.
We define $\calr^{(2)}$ as the set of ordered pairs $(g, h)$ of elements of $\calr$ such that their product $gh$ is defined.
Let $C$ be an abelian countable group.
We define $Z^2(\calr, C)$ as the space of \textit{2-cocycles}, i.e., measurable maps $\sigma \colon \calr^{(2)}\to C$ such that
\[\sigma(gh, k)\sigma(g, h)=\sigma(g, hk)\sigma(h, k)\ {\rm for\ any}\ g, h, k\in \calr \ {\rm with}\ (g, h), (h, k)\in \calr^{(2)}.\]
It is natural to ask the following question from the viewpoint to generalize Theorem \ref{thm-ce} in the framework of 2-cocycles of $\calr$:
Suppose that $\calr$ is p.m.p.\ and stable.
Fix $\sigma \in Z^2(\calr, C)$.
Does there exist a p.m.p.\ action $\calr \c (Z, \zeta)$ such that if $\tilde{\calr}$ denotes the equivalence relation on $(Z, \zeta)$ for the action, then the central extension of $\tilde{\calr}$ associated with $\sigma$ is stable?
We note that $Z^2(\calr, C)$ is naturally a subspace of $Z^2(\tilde{\calr}, C)$ through the projection from $Z$ onto $X$ and that each element of $Z^2(\tilde{\calr}, C)$ associates a central extension of $\tilde{\calr}$ as well as that for groups (\cite{series}).
Theorem \ref{thm-ce} answers this question affirmatively when $\calr$ arises from an ergodic, free and p.m.p.\ action of a countable group $\Gamma$ and $\sigma$ is induced by a central extension of $\Gamma$ by $C$.
These two assumptions are needed in Lemma \ref{lem-skn}, which is crucial in our construction.
More details are discussed in Remark \ref{rem-2}.
It is remarkable that there is such a technical obstacle to obtain the above generalization of Theorem \ref{thm-ce}.
\end{rem}

\begin{rem}
Let $G$ be a countable group and $N$ a normal subgroup of $G$.
Suppose that the action $G\ltimes N\c N$ is amenable, i.e., it has an invariant mean, where $G$ acts on $N$ by conjugation and $N$ acts on itself by translation.
The proof of \cite[Theorem 13 (viii)]{td} shows that any conjugacy-invariant mean on $G/N$ lifts to a conjugacy-invariant mean on $G$.
It is interesting to explore relationship among stability of $G$ and $G/N$ and property (T) of the pair $(G, N)$, toward generalizing Corollary \ref{cor-ce}. 
\end{rem}

In Section \ref{sec-pre}, we fix the notation and terminology on discrete measured groupoids, and review Jones-Schmidt's characterization of stability and co-induced actions for equivalence relations.
In Section \ref{sec-proof}, we prove Theorem \ref{thm-ce}.
Throughout the paper, all relations among measurable sets and maps that appear in the paper are understood to hold up to sets of measure zero, and all probability measure spaces are standard, unless otherwise stated.

The author thanks Narutaka Ozawa, Yuhei Suzuki and Yoshimichi Ueda for stimulus conversations around this work, and thanks the referee for reading the manuscript carefully and providing valuable suggestions.


\section{Preliminaries}\label{sec-pre}

\subsection{Stability and Jones-Schmidt's characterization}

Let $(X, \mu)$ be a standard probability space.
Let $G$ be a countable group and $G\c (X, \mu)$ its p.m.p.\ action.
We denote by $G \ltimes (X, \mu)$ the \textit{transformation-groupoid} associated with this action, and simply denote by $G \ltimes X$ if there is no confusion.
This is a discrete measured groupoid on $(X, \mu)$ whose elements are exactly those of $G\times X$.
The range and source of $(\gamma, x)\in G \ltimes X$ are $\gamma x$ and $x$, respectively.
The product on $G \ltimes X$ is defined by $(\gamma, \delta x)(\delta, x)=(\gamma \delta, x)$ for $\gamma, \delta \in G$ and $x\in X$.
The unit at $x\in X$ is $e_x:=(e, x)$.
The inverse of $(\gamma, x)\in G\ltimes X$ is $(\gamma^{-1}, \gamma x)$.
For a measurable map $U\colon X\to G$, let $U^\circ \colon X\to X$ denote the map defined by $U^\circ(x)=U(x)x$ for $x\in X$.
We define the {\it full group} of $\calg$, denoted by $[\calg]$, as the set of measurable maps $U\colon X\to G$ such that $U^\circ$ is an automorphism of $X$.
For $U, V\in [\calg]$, we define their product $U\cdot V\in [\calg]$ by $(U\cdot V)(x)=U(V^\circ(x))V(x)$ for $x\in X$.
For $U\in [\calg]$, we define its inverse $U^\dashv \in [\calg]$ by $U^\dashv(x)=U((U^\circ)^{-1}(x))^{-1}$ for $x\in X$.
The set $[\calg]$ has the group structure under these operations such that the neutral element is the map sending any $x\in X$ to $e$.

Let $\calr$ be a discrete measured equivalence relation on $(X, \mu)$.
This is a groupoid on $X$ as follows:
The range and source of $(x, y)\in \calr$ are $x$ and $y$, respectively; the product of $(x, y), (y, z)\in \calr$ is $(x, z)$; the unit at $x\in X$ is $e_x:=(x, x)$; and the inverse of $(x, y)\in \calr$ is $(y, x)$.
Let $\tilde{\mu}$ denote the measure on $\calr$ defined so that for a measurable subset $S\subset \calr$, the measure $\tilde{\mu}(S)$ is the $\mu$-integral of the function on $X$, $x\mapsto \vert \{ \, y\in X\mid (x, y)\in S \, \} \vert$.
We define the {\it full group} of $\calr$, denoted by $[\calr]$, as the group of automorphisms of $X$ whose graphs are contained in $\calr$.
Given a p.m.p.\ action of a countable group, $G\c (X, \mu)$, we denote by $\calr(G\c (X, \mu))$ the orbit equivalence relation of the action, i.e.,
\[\calr(G\c (X, \mu))=\{ \, (\gamma x, x)\in X\times X\mid \gamma \in G,\ x\in X\, \}.\]
It is denoted by $\calr(G\c X)$ if $\mu$ is understood from the context.

A sequence of measurable subsets of $X$, $(A_n)_n$, is called {\it asymptotically invariant (a.i.)} for the groupoid $\calg =G\ltimes X$ if for any $g\in G$, we have $\mu(gA_n\bigtriangleup A_n)\to 0$ as $n\to \infty$.
An a.i.\ sequence $(A_n)_n$ for $\calg$ is called {\it non-trivial} if $\mu(A_n)$ is uniformly away from $0$ and $1$.

\begin{thm}[\cite{js}, \cite{kida-srt}]\label{thm-js}
Let $G$ be a countable group and $G\c (X, \mu)$ an ergodic p.m.p.\ action.
We set $\calg =G\ltimes (X, \mu)$.
Let $\calr_0$ be the ergodic hyperfinite equivalence relation of type ${\rm II}_1$.
Then $\calg$ is stable, i.e., we have an isomorphism $\calg \simeq \calg \times \calr_0$, if and only if there exists a sequence of elements of $[\calg]$, $(U_n)_n$, satisfying the following conditions {\rm (a)--(c)}:
\begin{enumerate}
\item[(a)] For any measurable subset $A\subset X$, we have $\mu(U_n^\circ A\bigtriangleup A)\to 0$ as $n\to \infty$.
\item[(b)] For any $V\in [\calg]$, we have $\mu(\{\, x\in X\mid (U_n\cdot V)x\neq (V\cdot U_n)x\,\})\to 0$ as $n\to \infty$.
\item[(c)] There exists a non-trivial a.i.\ sequence $(A_n)_n$ for $\calg$ such that $\mu(U_n^\circ A_n\bigtriangleup A_n)$ does not converge to $0$ as $n\to \infty$.
\end{enumerate}
\end{thm}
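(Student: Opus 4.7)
The plan is to prove both directions of the characterization following the strategy pioneered by Jones--Schmidt for equivalence relations, adapted to the groupoid setting.

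For the direction $\calg \simeq \calg \times \calr_0 \Rightarrow$ existence of $(U_n)_n$: fix an isomorphism $\Phi$. On the $\calr_0$ factor, realized via its odometer model on some $(Y,\nu)$, there exist canonical non-trivial a.i.\ sequences $(B_n)_n$ together with asymptotically central sequences $(T_n)_n \in [\calr_0]$ satisfying $\nu(T_n^\circ B_n \bigtriangleup B_n) \not\to 0$. Transfer the pair $(T_n, X \times B_n)$ through $\Phi$ to obtain $(U_n, A_n)$ on the $\calg$-side. Conditions (a) and (b) follow because on the product $\calg \times \calr_0$ the element $T_n$ acts trivially on the $X$-coordinate and commutes with every element supported purely on the $\calg$-factor, while being asymptotically central among the elements supported on the $\calr_0$-factor; condition (c) transfers directly from $(B_n)_n$.

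For the converse, which is the substantive direction, the goal is to extract a hyperfinite ergodic subgroupoid $\calh \subset \calg$ of type II$_1$ whose full group asymptotically centralizes $[\calg]$, and then to build an isomorphism $\calg \simeq \calg \times \calh$; Dye's uniqueness theorem identifies $\calh$ with $\calr_0$. The subgroupoid $\calh$ is constructed inductively. Using conditions (a)--(c) and a diagonal extraction, one first produces from $(A_n, U_n)_n$ a single non-trivial measurable partition $\{B, B^c\}$ of $X$ together with an involution $V_1 \in [\calg]$ approximately interchanging the two parts and asymptotically commuting with every element of $[\calg]$. Iterating the procedure inside each atom, using the residual asymptotic centrality of $(U_n)_n$, one obtains for every $k$ a dyadic partition $\{B_w\}_{w \in \{0,1\}^k}$ together with a commuting family of involutions in $[\calg]$ implementing the dyadic permutations and asymptotically centralizing $[\calg]$. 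The subgroupoid generated by these elements is the desired $\calh$.

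The main obstacle is the inductive refinement step: at stage $k$ one must simultaneously preserve asymptotic centrality with respect to $[\calg]$, maintain commutation with the previously chosen involutions, and avoid losing non-triviality of the a.i.\ data within each atom. This is handled by a Rohlin-type lemma applied inside each atom, together with a careful diagonal choice of sub-subsequences of $(U_n)_n$; ergodicity of $\calg$ is used to guarantee that within each almost-invariant atom there remain non-trivial a.i.\ sequences moved by the central sequence, so that the construction does not stall. Once $\calh$ is in place, the product decomposition $\calg \simeq \calg \times \calh$ is obtained by a standard cocycle-splitting argument: the asymptotic commutation of $[\calh]$ with $[\calg]$ lets one choose a measurable transversal for the $\calh$-equivalence inside $X$ compatible with $\calg$, and the hyperfiniteness of $\calh$ together with Dye's theorem yields $\calg \simeq \calg \times \calr_0$.
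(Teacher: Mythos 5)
First, a point of reference: the paper does not prove Theorem 2.1 at all --- it is quoted from \cite{js} (free actions) and \cite{kida-srt} (the groupoid/non-free generalization), so there is no in-paper proof to compare against. Judging your outline against the actual Jones--Schmidt argument: your forward direction is the standard one and is essentially correct, though to get condition (b) for an \emph{arbitrary} $V\in[\calg \times \calr_0]$ (which need not split as a product of elements supported on the two factors) you need the reduction to a generating set, i.e.\ exactly the content of Lemma 2.3~(ii); this should be said explicitly but is not a gap.

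The converse, however, has genuine problems. (i) The final step, ``choose a measurable transversal for the $\calh$-equivalence inside $X$,'' cannot work: a hyperfinite subrelation of type ${\rm II}_1$ (infinite classes, invariant probability measure) admits no measurable transversal --- the existence of one would force it to be type ${\rm I}$. What the Jones--Schmidt argument actually does is arrange the dyadic sets to be \emph{independent} and the involutions to commute \emph{exactly}, so that $x\mapsto(1_{B_n}(x))_n$ together with the complementary $\sigma$-algebra identifies $(X,\mu)$ with a product $(Y\times\prod_\N\Z/2\Z,\ \nu\times\mu_0)$ carrying $\calh\simeq\cali_Y\times\calr_0$; in particular $\calh$ should \emph{not} be ergodic on $X$ (its ergodic components are the $\calr_0$-fibers), and even then the isomorphism $\calg\simeq\calg\times\calr_0$ requires an intertwining argument, not just Dye's uniqueness theorem. (ii) The entire analytic content --- upgrading a sequence satisfying only $\mu(U_n^\circ A_n\bigtriangleup A_n)\not\to 0$ to exactly commuting involutions $V_n$ with $V_n^\circ B_n=X\setminus B_n$, $\mu(B_n)=1/2$, independence of the $B_n$, all while retaining asymptotic centrality --- is deferred to ``a Rohlin-type lemma'' and ``careful diagonal choices''; this is where the proof actually lives, and nothing in your sketch shows how non-triviality of (c) survives the perturbations needed to achieve exact commutation. (iii) You never engage with non-freeness: for a non-free action an element of $[\calg]$ is a measurable map $X\to G$ rather than an automorphism of $X$, condition (b) is strictly stronger than centrality in $[\calr(G\c X)]$, and the involutions must be produced in $[\calg]$ itself; this is precisely the issue that forces the citation of \cite{kida-srt} in addition to \cite{js}.
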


This was originally proved by Jones-Schmidt \cite[Theorem 3.4]{js} for free actions, and the generalization to non-free actions was proved in \cite[Theorem 3.1]{kida-srt}.
We call a sequence of elements of $[\calg]$, $(U_n)_n$, satisfying the above conditions (a)--(c) a \textit{non-trivial asymptotically central (a.c.) sequence} for $\calg$.
For $V\in [\calg]$, let us say that $(U_n)_n$ \textit{asymptotically commutes} with $V$ if $\mu(\{\, x\in X\mid (U_n\cdot V)x\neq (V\cdot U_n)x\,\})\to 0$ as $n\to \infty$.

\begin{rem}\label{rem-1/2}
With the notation in Theorem \ref{thm-js}, we suppose that the action $G\c (X, \mu)$ is p.m.p.\ and not necessarily ergodic and that there exists a sequence $(U_n)_n$ in $[\calg]$ satisfying conditions (a), (b) and the following:
\begin{enumerate}
\item[${\rm (c)}'$] There exists an a.i.\ sequence $(A_n)_n$ for $\calg$ such that $U_n^\circ A_n=X\setminus A_n$ for any $n$.
\end{enumerate}
Let $\theta \colon (X, \mu)\to (T, \xi)$ be the ergodic decomposition for the action $G\c (X, \mu)$.
We have $\mu =\int_T\mu_t\, d\xi(t)$, the disintegration of $\mu$ with respect to $\theta$.
It follows that for $\xi$-a.e.\ $t\in T$, the groupoid $G\ltimes (X, \mu_t)$ is p.m.p.\ and ergodic.
For any measurable subset $A\subset X$ and any $V\in [\calg]$, the equation $\mu(V^\circ A\bigtriangleup A)=\int_T\mu_t(V^\circ A\bigtriangleup A)\, d\xi(t)$ holds.
Using this, we can find a subsequence of $(U_n)_n$ such that for $\xi$-a.e.\ $t\in T$, the subsequence is a non-trivial a.c.\ sequence for $G\ltimes (X, \mu_t)$.
For $\xi$-a.e.\ $t\in T$, the groupoid $G\ltimes (X, \mu_t)$ is therefore stable. 
\end{rem}

For a subset $\calv \subset [\calg]$, let $\calv^\dashv$ denote the set of inverses of elements in $\calv$ and set
\[\bar{\calv}=\{ \, V_1\cdot \cdots \cdot V_m\in [\calg]\mid V_1,\ldots, V_m\in \calv \cup \calv^\dashv,\ m\in \N \, \}.\]
We say that $\calv$ \textit{generates} $\calg$ if the equation $\calg =\{ \, (Vx, x)\in \calg \mid V\in \bar{\calv},\ x\in X\, \}$ holds.

As shown in Lemma \ref{lem-ab} (i) below, in Theorem \ref{thm-js}, condition (a) is indeed superfluous if condition (b) is assumed.
In our construction of stable actions, we will obtain condition (a) and a condition weaker than condition (b), and derive condition (b) from them, applying Lemma \ref{lem-ab} (ii).

\begin{lem}\label{lem-ab}
Let $G$ be a countable group and $G\c (X, \mu)$ a p.m.p.\ action.
We set $\calg =G\ltimes (X, \mu)$.
Let $(U_n)_n$ be a sequence of elements of $[\calg]$.
Regarding conditions {\rm (a)} and {\rm (b)} in Theorem \ref{thm-js} for $(U_n)_n$, the following assertions hold:
\begin{enumerate}
\item[(i)] If the action $G\c (X, \mu)$ is ergodic and not essentially transitive, then condition {\rm (b)} implies condition {\rm (a)}.
\item[(ii)] If condition {\rm (a)} holds and there is a subset $\calv \subset [\calg]$ generating $\calg$ such that $(U_n)_n$ asymptotically commutes with any element of $\calv$, then condition {\rm (b)} holds.
\end{enumerate}
\end{lem}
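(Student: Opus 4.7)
For assertion (i), my plan is to argue that condition (b) forces $U_n^\circ$ to be asymptotically central in the full group of the orbit equivalence relation $\calr := \calr(G \c (X, \mu))$. Condition (b) implies, for every $V \in [\calg]$, that $\mu(\{x : U_n^\circ V^\circ(x) \neq V^\circ U_n^\circ(x)\}) \to 0$, since the equality of elements of $G$ in (b) projects onto equality of their actions on $X$. Combined with the fact that every $\phi \in [\calr]$ lifts to some $V \in [\calg]$ with $V^\circ = \phi$, the automorphism $U_n^\circ$ asymptotically commutes in $\mathrm{Aut}(X, \mu)$ with every element of $[\calr]$.

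I would then consider $\mathcal{F} := \{A \subset X : \mu(U_n^\circ A \bigtriangleup A) \to 0\}$, a $\sigma$-algebra which, thanks to the above asymptotic commutation and the measure-preserving property of $[\calr]$, is $[\calr]$-invariant (via the triangle inequality $\mu(U_n^\circ \phi A \bigtriangleup \phi A) \leq \mu(U_n^\circ \phi A \bigtriangleup \phi U_n^\circ A) + \mu(U_n^\circ A \bigtriangleup A)$). Ergodicity plus non-essential-transitivity makes $(X, \mu)$ non-atomic, and a classical dichotomy says that any $[\calr]$-invariant $\sigma$-subalgebra of the Borel $\sigma$-algebra is either $\{\emptyset, X\}$ or the full Borel $\sigma$-algebra. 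The main obstacle is ruling out the trivial case; the cleanest route is to extract, by diagonalizing over a countable dense family of sets, a subsequential limit $U_\omega \in \mathrm{Aut}(X, \mu)$ of $(U_n^\circ)_n$ in the measure-algebra topology that strictly commutes with every $\phi \in [\calr]$, and then invoke the classical fact that the centralizer of $[\calr]$ in $\mathrm{Aut}(X, \mu)$ is trivial. The latter follows from a transposition argument: if $\phi \neq \mathrm{id}$, choose a small $A$ with $\phi(A) \cap A = \emptyset$ and $B$ disjoint from $A \cup \phi(A)$ of measure $\mu(A)$; the involution $\psi \in [\calr]$ swapping $A$ and $B$ and fixing the rest yields $\phi(A) = \psi \phi(A) = \phi \psi(A) = \phi(B)$, contradicting $A \cap B = \emptyset$ by injectivity of $\phi$.

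For assertion (ii), the plan is to bootstrap from $\calv$ to all of $[\calg]$. A direct computation using $(U \cdot V)^\circ = U^\circ \circ V^\circ$ and $(U \cdot V)(x) = U(V^\circ(x)) V(x)$ shows that the set of $V \in [\calg]$ asymptotically commuting with $(U_n)_n$ (in the sense of (b)) is closed under products and inverses; the key relation $V^\circ \circ U_n^\circ \approx U_n^\circ \circ V^\circ$ on $X$ follows from condition (b) for $V$. Hence $(U_n)_n$ asymptotically commutes with every element of $\bar{\calv}$.

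Given an arbitrary $W \in [\calg]$, the generating property of $\calv$ yields, for a.e.\ $x$, an element $V_x \in \bar{\calv}$ with $V_x(x) = W(x)$; a measurable selection then produces a countable measurable partition $X = \sqcup_j X_j$ and elements $V_j \in \bar{\calv}$ with $V_j|_{X_j} = W|_{X_j}$. For $x \in X_j$ we have $W^\circ(x) = V_j^\circ(x)$ and $W(x) = V_j(x)$, so $(U_n \cdot W)(x) = (U_n \cdot V_j)(x)$. Condition (a) applied to each $X_j$ gives $\mu(X_j \bigtriangleup (U_n^\circ)^{-1} X_j) \to 0$, so for $x$ in a subset of $X_j$ of asymptotically full measure we also have $U_n^\circ(x) \in X_j$, whence $(W \cdot U_n)(x) = (V_j \cdot U_n)(x)$. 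Combining with the asymptotic commutation with $V_j$ and summing over the countably many $j$ yields (b). The principal technical point here is the measurable-selection step, which is manageable thanks to the countability of $G$, though some care is needed if $\calv$ itself is uncountable.
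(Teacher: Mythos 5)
Your proof of assertion (ii) is correct and is essentially the paper's argument: decompose $W$ over a countable family of elements of $\bar{\calv}$ via a measurable partition, note that asymptotic commutation passes to products and inverses of elements of $\calv$, use condition (a) to keep $U_n^\circ x$ in the same piece as $x$, and conclude. The only point to make explicit is that the ``sum over countably many $j$'' must first be truncated to a finite subfamily carrying all but $\varepsilon$ of the measure, and only then let $n\to\infty$; otherwise the error terms need not be summable uniformly in $n$.

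Assertion (i) is where the proposal breaks down. The first reduction (condition (b) projects to asymptotic commutation of $U_n^\circ$ with every $\phi\in[\calr]$) and the observation that $\mathcal{F}=\{A\subset X:\mu(U_n^\circ A\bigtriangleup A)\to 0\}$ is an $[\calr]$-invariant $\sigma$-subalgebra are fine, and the trivial-or-full dichotomy does hold here. But the dichotomy buys nothing: $\{\emptyset,X\}\subset\mathcal{F}$ automatically, so you must still exhibit one non-trivial set in $\mathcal{F}$, which is the entire content of condition (a). Your route to this --- extracting a subsequential limit $U_\omega\in\mathrm{Aut}(X,\mu)$ of $(U_n^\circ)_n$ --- is not available: $\mathrm{Aut}(X,\mu)$ is not sequentially compact in the uniform (measure-algebra) topology (take $U_n^\circ=T^n$ for a mixing $T$: no subsequence of $(T^nA)_n$ converges in symmetric difference), and a weak-operator limit of automorphisms is in general only a doubly stochastic operator, not an automorphism. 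If instead $U_\omega$ is meant as an ultraproduct element, the fact ``the centralizer of $[\calr]$ is trivial'' fails in the ultraproduct --- non-trivial asymptotically central sequences are exactly elements of that asymptotic centralizer, and their existence is the phenomenon under study --- so the argument becomes circular. The paper's proof is direct and avoids all of this: given $A$, build a single $V\in[\calg]$ with $V=e$ on $A$ and $V\neq e$ on $X\setminus A$ (exchange two halves of $X\setminus A$, using ergodicity and non-atomicity, the latter coming from non-essential-transitivity). For $x\in A$ one has $(U_n\cdot V)x=U_nx$ and $(V\cdot U_n)x=V(U_n^\circ x)U_nx$, so condition (b) for this one $V$ forces $V(U_n^\circ x)=e$, i.e.\ $U_n^\circ x\in A$, off a set of small measure; since $U_n^\circ$ preserves $\mu$, this bounds $\mu(U_n^\circ A\bigtriangleup A)$. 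Your own first reduction could be completed the same way inside $[\calr]$ by taking an involution whose fixed-point set is exactly $A$, but the $\sigma$-algebra-plus-limit detour as written does not close.
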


This lemma indeed holds for more general groupoids, not only transformation-groupoids, and almost the same proof is available.
To avoid complication, we do not deal with general groupoids.

\begin{proof}[Proof of Lemma \ref{lem-ab}]
We prove assertion (i).
Let $A\subset X$ be a measurable subset.
There exists $V\in [\calg]$ such that $V=e$ on $A$ and $V\neq e$ on $X\setminus A$.
In fact, dividing $X\setminus A$ into two measurable subsets of the same measure, we can find $V\in [\calg]$ such that $V=e$ on $A$ and $V^\circ$ exchanges the two subsets, by the assumption in assertion (i).
For any $x\in A$, we have $(U_n\cdot V)x=U_nx$ and $(V\cdot U_n)x=V(U_n^\circ x)U_nx$.
By condition (b), if $n$ is large enough, then for any $x\in A$ outside a subset of small measure, these two elements of $G$ are equal and thus $V(U_n^\circ x)=e$.
By the definition of $V$, for any such $x$, we have $U_n^\circ x\in A$.
The measure $\mu(U_n^\circ A\bigtriangleup A)$ is therefore small.
Assertion (i) follows.

To prove assertion (ii), we pick $V\in [\calg]$ and $\varepsilon >0$.
Since $\calv$ generates $\calg$, there exist a countable subset $\mathcal{W}\subset \bar{\calv}$ and a decomposition into measurable subsets, $X=\bigsqcup_{v\in \mathcal{W}}A_v$, such that for each $v\in \mathcal{W}$, we have $V=v$ on $A_v$.
There exists a finite subset $F\subset \mathcal{W}$ with $\mu(X\setminus \bigsqcup_{v\in F}A_v)<\varepsilon$.
By the assumption on $(U_n)_n$, for any large enough $n$ and for any $v\in F$, we have $\mu(U_n^\circ A_v\bigtriangleup A_v)<\varepsilon /|F|$ and $\mu(\{ \, U_n\cdot v\neq v\cdot U_n\, \})<\varepsilon /|F|$.
For any $x\in A_v$ with $(U_n\cdot V)x\neq (V\cdot U_n)x$, either $U_n^\circ x\not\in A_v$ or $U_n^\circ x\in A_v$ and $(U_n\cdot v)x=(U_n\cdot V)x\neq (V\cdot U_n)x=(v\cdot U_n)x$.
It follows that $\mu(\{ \, U_n\cdot V\neq V\cdot U_n\, \} \cap A_v)<2\varepsilon /|F|$ for any $v\in F$ and that $\mu(\{ \, U_n\cdot V\neq V\cdot U_n\, \})<3\varepsilon$.
Condition (b) follows.
\end{proof}

When $\calg$ arises from a central group-extension, condition (b) in Theorem \ref{thm-js} is further reduced as follows:

\begin{lem}\label{lem-cent}
Let $1\to C\to G\stackrel{q}{\to}\Gamma \to 1$ be an exact sequence of countable groups such that $C$ is central in $G$.
Let $\Gamma \c (X, \mu)$ be a free and p.m.p.\ action.
Let $G$ act on $(X, \mu)$ through $q$.
Set $\calr =\calr(\Gamma \c X)$ and $\calg =G\ltimes X$.
We define the quotient map $\tilde{q}\colon \calg \to \calr$ by $\tilde{q}(\gamma, x)=(q(\gamma)x, x)$ for $\gamma \in G$ and $x\in X$.
Pick a measurable section $\fs \colon \calr \to \calg$ of $\tilde{q}$.

For $V\in [\calr]$, let $\tilde{V}\colon X\to G$ denote the element of $[\calg]$ defined by $(\tilde{V}(x), x)=\fs(Vx, x)$ for $x\in X$.
Let $(U_k)_k$ be a sequence in $[\calg]$ satisfying the following conditions {\rm (a)} and {\rm (b)}:
\begin{enumerate}
\item[(a)] For any measurable subset $A\subset X$, we have $\mu(U_k^\circ A\bigtriangleup A)\to 0$ as $k\to \infty$.
\item[(b)] There exists a subset $\calv \subset [\calr]$ generating $\calr$ such that for any $V\in \calv$, the sequence $(U_k)_k$ asymptotically commutes with $\tilde{V}$.
\end{enumerate}
Then the sequence $(U_k)_k$ asymptotically commutes with any element of $[\calg]$.
\end{lem}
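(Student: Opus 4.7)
The plan is to deduce the conclusion from Lemma \ref{lem-ab} (ii) applied to the groupoid $\calg$. Hypothesis (a) of the present lemma is exactly condition (a) of Lemma \ref{lem-ab} (ii), so the task reduces to producing a subset $\mathcal{W} \subset [\calg]$ that generates $\calg$ and with which $(U_k)_k$ asymptotically commutes. I would take $\mathcal{W} = \tilde{\calv} \cup \scc$, where $\tilde{\calv} = \{\,\tilde{V} : V \in \calv\,\}$ and $\scc$ is the set of all constant maps $c_0 \colon X \to C \subset G$ with $c_0 \in C$. Each such constant lies in $[\calg]$ because $C \subset \ker q$ acts trivially on $X$, so its induced transformation is $\mathrm{id}_X$.

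To verify that $\mathcal{W}$ generates $\calg$, I would take $(\gamma, x) \in \calg$, set $\gamma' := q(\gamma)$, and exploit the generation of $\calr$ by $\calv$ to obtain $V_1, \ldots, V_m \in \calv \cup \calv^\dashv$ with $(V_1 \cdots V_m)x = \gamma' x$. Forming $W_0 := \tilde{V_1} \cdots \tilde{V_m} \in [\calg]$ (using the inverses $\tilde{V_i}^\dashv$ when $V_i \in \calv^\dashv$), a direct unwinding of the definitions gives $W_0^\circ = (V_1 \cdots V_m)^\circ$, whence $q(W_0(x))\cdot x = \gamma'x = \gamma x$; freeness of $\Gamma \c X$ then forces $q(W_0(x)) = \gamma'$, so that $c := W_0(x)\gamma^{-1} \in C$. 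The product $W := W_0 \cdot c^{-1}$, with $c^{-1}$ interpreted as the corresponding constant element of $\scc$, lies in $\bar{\mathcal{W}}$ and satisfies $W(x) = W_0(x) c^{-1} = \gamma$, so $(\gamma, x) = (Wx, x)$ as required.

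The asymptotic commutation of $(U_k)_k$ with each element of $\mathcal{W}$ is then essentially for free. For $\tilde{V}$ with $V \in \calv$ this is exactly hypothesis (b). For a constant $c_0 \in \scc$, centrality of $c_0$ in $G$ and the identity $c_0^\circ = \mathrm{id}_X$ give $(U_k \cdot c_0)(x) = U_k(x) c_0 = c_0 U_k(x) = (c_0 \cdot U_k)(x)$ for every $x \in X$, which is exact (not merely asymptotic) commutation. Applying Lemma \ref{lem-ab} (ii) with $\calv$ replaced by $\mathcal{W}$ then yields condition (b) of Theorem \ref{thm-js} for $(U_k)_k$, which is precisely the conclusion of the lemma. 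I do not foresee any genuine obstacle: the two observations making the argument work are that central constants automatically sit in the centre of $[\calg]$, and that freeness of $\Gamma \c X$ confines the ambiguity in lifting a product of generators from $[\calr]$ to $[\calg]$ to exactly the central subgroup $C$, which the constants in $\scc$ absorb.
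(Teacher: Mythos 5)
Your proof is correct, and it reaches the conclusion by a route that differs from the paper's in one meaningful respect. Both arguments reduce to Lemma \ref{lem-ab} (ii), but they feed it different generating sets. The paper takes $\mathcal{W}$ to be the set of all $W\in[\calg]$ of the form $W(x)=\tilde{V}(x)\varphi(x)$ with $V\in\calv$ and $\varphi\colon X\to C$ an arbitrary measurable map; generation is then essentially automatic, but asymptotic commutation with such a $W$ costs an extra $\varepsilon$-argument: one fixes $c\in C$, sets $A=\varphi^{-1}(c)$, and invokes condition (a) to ensure that $U_k^\circ$ approximately preserves $A$, so that $\varphi$ may be treated as the constant $c$ on most of $A$ and then pulled through $U_k$ by centrality. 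You instead shrink the generating set to $\tilde{\calv}\cup\scc$, for which commutation is either hypothesis (b) verbatim or exact (by centrality and $c_0^\circ=\mathrm{id}_X$), and you shift the work into the generation step, where freeness of $\Gamma\c X$ pins the discrepancy between $\tilde{V_1}\cdot\cdots\cdot\tilde{V_m}$ and the target group element down to a point-dependent but $C$-valued (hence countably-valued, hence measurably piecewise-constant) central correction, which the constants in $\scc$ absorb. The ingredients are the same — conditions (a) and (b), centrality of $C$, and freeness — but your version localizes the use of condition (a) entirely inside Lemma \ref{lem-ab} (ii) and dispenses with the level-set argument, at the price of a slightly more careful generation verification; the paper's version keeps the generating set closed under arbitrary measurable $C$-twists and pays with the extra approximation step. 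Both proofs are complete.
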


\begin{proof}
We define $\mathcal{W}$ as the set of elements $W\in [\calg]$ such that there are an element $V\in \calv$ and a measurable map $\varphi \colon X\to C$ with $W(x)=\tilde{V}(x)\varphi(x)$ for any $x\in X$.
The set $\mathcal{W}$ then generates $\calg$.
By Lemma \ref{lem-ab} (ii), it suffices to show that $(U_k)_k$ asymptotically commutes with any element of $\mathcal{W}$. 
Pick $W\in \mathcal{W}$ and take $V\in \calv$ and $\varphi \colon X\to C$ as above.
Fix $c\in C$ and set $A=\varphi^{-1}(c)$.
Conditions (a) and (b) imply that if $k$ is large enough, then for any $x\in A$ outside a subset of small measure, we have $U_k^\circ x\in A$ and $(U_k\cdot \tilde{V})x=(\tilde{V}\cdot U_k)x$, and hence $(U_k\cdot W)x=U_k(W^\circ x)W(x)=U_k(\tilde{V}^\circ x)c\tilde{V}(x)= c\tilde{V}(U_k^\circ x)U_k(x) =W(U_k^\circ x)U_k(x)=(W\cdot U_k)x$, where the fourth equation holds because $U_k^\circ x\in A$.
The lemma follows.
\end{proof}


\subsection{Co-induced actions for equivalence relations}

Let $(X, \mu)$ be a standard probability space.
Let $\calr$ be a discrete measured equivalence relation on $(X, \mu)$ which is \textit{p.m.p.}, that is, any element of $[\calr]$ preserves the measure $\mu$.
Let $\cals$ be a subrelation of $\calr$.
Suppose that $\cals$ has a \textit{p.m.p.\ action} on a standard probability space $(B, \lambda)$ in the following sense:
We have a measurable map from $\cals \times B$ into $B$, denoted by $(g, b)\mapsto gb$, such that $e_xb=b$ for any $x\in X$ and any $b\in B$; $g(hb)=(gh)b$ for any $g, h\in \cals$ whose product $gh\in \cals$ is defined and for any $b\in B$; and for any $g\in \cals$, the automorphism of $B$, $b\mapsto gb$, preserves the measure $\lambda$.
The aim of this subsection is to introduce the action of $\calr$ co-induced from the action $\cals \c (B, \lambda)$, analogously to co-induced actions for countable groups (e.g., \cite[\S 3.4]{g} and \cite[\S 10 (G)]{kec}).

An action of a groupoid is usually defined as an action on a fibered space over the unit space which preserves fibers in the way compatible with the range and source maps.
The action $\cals \c (B, \lambda)$ defined above corresponds to a groupoid action on a constant fibered space.
For our purpose and simplifying construction, we are only concerned with the action co-induced from such a special action of $\cals$.

For $x\in X$, let $\calr^x$ be the set of elements of $\calr$ whose ranges are $x$, and define the set
\[\Sigma_x=\{ \, f\colon \calr^x\to B\mid f(x, y)=(y, z)f(x, z)\ {\rm for\ any}\ y, z\in \calr x\ {\rm with}\ (y, z)\in \cals \, \},\]
where for $x\in X$, we denote by $\calr x$ the $\calr$-equivalence class containing $x$.
Set $\Sigma =\bigsqcup_{x\in X}\Sigma_x$.
We have the specified projection from $\Sigma$ onto $X$ sending each element of $\Sigma_x$ to $x$.
Let $\calr$ act on $\Sigma$ as follows:
For $g=(y, x)\in \calr$ and $f\in \Sigma_x$, we define $gf\in \Sigma_y$ by $(gf)(y, z)=f(g^{-1}(y, z))=f(x, z)$ for $z\in \calr y$.

We equip $\Sigma$ with measure-space structure.
Let $I\colon X\to \N \cup \{ \infty \}$ be the \textit{index function} for the inclusion $\cals <\calr$, namely, the function assigning to each $x\in X$ the number of $\cals$-equivalence classes contained in $\calr x$.
By \cite[Lemma 1.1 (a)]{fsz} and its proof, the function $I$ is measurable and for any $(x, y)\in \calr$, we have $I(x)=I(y)$.

We first suppose that $I$ is constant and its value is $N\in \N \cup \{ \infty \}$.
By \cite[Lemma 1.1 (b)]{fsz}, there exists a family of measurable maps from $X$ into itself, $\{ \, \phi_i\mid 0\leq i<N\, \}$, such that $\phi_0$ is the identity map of $X$ and the family $\{ \, \cals \phi_i(x)\mid 0\leq i<N\, \}$ partitions $\calr x$.
We have the bijection $\Phi \colon \Sigma \to X\times \prod_{0\leq i<N}B$ defined by $\Phi(f)=(x, (f(x, \phi_i(x)))_i)$ for $x\in X$ and $f\in \Sigma_x$.
Under this identification, $\Sigma$ is equipped with the measurable structure induced by the product measurable structure on $X\times \prod_{0\leq i<N}B$.
It is also equipped with the probability measure induced by the product measure $\mu \times \prod_{0\leq i<N}\lambda$.
This structure on $\Sigma$ does not depend on the choice of the family $\{ \phi_i\}_i$.
The action of $\calr$ on $\Sigma$ is measurable and p.m.p. We call it the action of $\calr$ \textit{co-induced} from the action $\cals \c (B, \lambda)$.

A remarkable property of the co-induced action is that the projection $p\colon \Sigma \to B$ defined by $p(f)=f(e_x)$ for $f\in \Sigma_x$ and $x\in X$ is $\cals$-equivariant.
That is, for any $g=(y, x)\in \cals$ and $f\in \Sigma_x$, we have $p(gf)=gp(f)$.
This equation follows because $p(gf)=(gf)(e_y)=f(g^{-1}e_y)=f(x, y)=f(e_xg^{-1})=g(f(e_x))=gp(f)$.

If $I$ is not constant, then $X$ is decomposed into $\calr$-invariant subsets $X_N:=I^{-1}(N)$ with $N\in \N \cup \{ \infty \}$.
We have the action of $\calr|_{X_N}\c \Sigma_N$ co-induced from the action $\cals |_{X_N}\c B$, where we set $\calr|_{X_N}=\calr \cap (X_N\times X_N)$ and set $\cals|_{X_N}$ similarly.
We call the union of these actions, $\calr \c \bigsqcup_N \Sigma_N$, the action of $\calr$ \textit{co-induced} from the action $\cals \c (B, \lambda)$.
As shown in the last paragraph, we have the projection from $\bigsqcup_N\Sigma_N$ onto $B$ that is $\cals$-equivariant.


\section{Proof of Theorem \ref{thm-ce}}\label{sec-proof}

Let $1\to C\to G\stackrel{q}{\to}\Gamma \to 1$ be an exact sequence of countable groups such that $C$ is central in $G$.
Suppose that we have a free, p.m.p.\ and stable action $\Gamma \c (X, \mu)$.
We will construct a p.m.p.\ extension $\Gamma \c (\Omega, \eta)$ of the action $\Gamma \c (X, \mu)$ such that if $G$ acts on $(\Omega, \eta)$ through the map $q$, then the groupoid $G\ltimes (\Omega, \eta)$ has a non-trivial a.c.\ sequence.
Ergodicity of the action $\Gamma \c (X, \mu)$ will be assumed after Lemma \ref{lem-c-infinite} to prove Theorem \ref{thm-ce} in the end of this section.

Let $\mathsf{s}\colon \Gamma \to G$ be a section of the quotient map $q\colon G\to \Gamma$ with $\mathsf{s}(e)=e$.
We have the 2-cocycle $\sigma \colon \Gamma \times \Gamma \to C$ associated with $\mathsf{s}$.
Namely, it is defined by $\sigma(\gamma, \delta)\mathsf{s}(\gamma \delta)=\mathsf{s}(\gamma)\mathsf{s}(\delta)$ for $\gamma, \delta \in \Gamma$.
We set $\calr =\calr(\Gamma \c X)$.
The assumption that $\calr$ is stable implies that it is decomposed as $\calr =\calr_0\times \calr_1$, where $\calr_0$ is the ergodic hyperfinite equivalence relation of type $\textrm{II}_1$.
Let $(X_0, \mu_0)$ and $(X_1, \mu_1)$ denote the probability spaces on which $\calr_0$ and $\calr_1$ are defined, respectively.
We may assume that $\calr_0 =\calr( \bigoplus_\N \Z /2\Z \c \prod_\N \Z /2\Z)$, where the action is given by addition, we have $X_0=\prod_\N \Z /2\Z$, and $\mu_0$ is the Haar measure on the compact group $\prod_\N \Z /2\Z$.
Let $\cali_0$ denote the trivial equivalence relation on $(X_0, \mu_0)$.
We naturally identify the full group $[\calr_1]$ with the subgroup of the full group $[\cali_0 \times \calr_1]$ under the map $V\mapsto \textrm{id}_{X_0} \times V$ for $V\in [\calr_1]$.
Similarly, we define $\cali_1$ and identify $[\calr_0]$ with the subgroup of $[\calr_0\times \cali_1]$.
For $n\in \N$, let $V_n\in [\calr_0]$ be the transformation exchanging $0$ and $1$ in the $n$-th coordinate of $X_0=\prod_\N \Z /2\Z$ and fixing the other coordinates.

Fix $k, n\in \N$ with $k<n$.
We set
\[X_0^{\leqslant k}=\prod_1^k\Z /2\Z\quad \textrm{and}\quad X_0^{>k}=\prod_{k+1}^\infty \Z /2\Z\]
so that $X_0=X_0^{\leqslant k}\times X_0^{>k}$ naturally.
Let $\mu_0^{\leqslant k}$ and $\mu_0^{>k}$ denote the measures on $X_0^{\leqslant k}$ and $X_0^{>k}$, respectively, with $\mu_0=\mu_0^{\leqslant k}\times \mu_0^{>k}$.
We set
\[\calr_0^{\leqslant k}=\calr \left(\bigoplus_1^k\Z /2\Z \c X_0^{\leqslant k}\right),\]
and let $\cali_0^{>k}$ be the trivial equivalence relation on $X_0^{>k}$.
We define
\[\cals_{k, n}=\{ \, h\in \calr_0^{\leqslant k}\times \cali_0^{>k}\times \calr_1\mid \tau(h)V_n z=V_n\tau(h)z,\ \textrm{where}\ z\ \textrm{is\ the\ source\ of\ }h.\, \},\] 
where $\tau \colon \calr \to \Gamma$ is the map defined by $\tau(\gamma x, x)=\gamma$ for $\gamma \in \Gamma$ and $x\in X$.
The set $\cals_{k, n}$ is a measurable subset of $\calr$.
In Lemma \ref{lem-skn} below, we will show that $\cals_{k, n}$ is an equivalence relation.
For $x\in X_0^{>k}$, let $e_x=(x, x)\in \cali_0^{>k}$ be the unit element, and we set
\[\cals_{k, n}^x=\cals_{k, n}\cap (\calr_0^{\leqslant k}\times \{ e_x\}\times \calr_1).\]
Note that $\cals_{k, n}$ consists of pairwise disjoint sets $\cals_{k, n}^x$ running through $x\in X_0^{>k}$.

Let $G$ act on $X$ through the quotient map $q\colon G\to \Gamma$, and let $G\ltimes X$ be the associated transformation-groupoid.
Using the section $\mathsf{s}\colon \Gamma \to G$, we define a section $\fs \colon \calr \to G\ltimes X$ as follows:
For $(\gamma x, x)\in \calr$ with $\gamma \in \Gamma$ and $x\in X$, we set $\fs(\gamma x, x)=(\mathsf{s}(\gamma), x)$.
We define a map $\bar{\fs}\colon \calr \to G$ as the composition of the map $\fs$ and the projection from $G\ltimes X$ onto $G$.

Fix $k, n\in \N$ with $k<n$ again.
We often naturally identify the two equivalence relations, $\calr_0^{\ls k}\times \cali_0^{>k} \times \calr_1$ and $\cali_0^{>k} \times (\calr_0^{\ls k}\times \calr_1)$, under exchanging coordinates.
We set
\[X^{\ls k}=X_0^{\ls k}\times X_1,\quad \mu^{\ls k}=\mu_0^{\ls k}\times \mu_1\quad \textrm{and}\quad \calr^{\ls k}=\calr_0^{\ls k}\times \calr_1.\]
We define a map $\sigma_{k, n}\colon \cali_0^{>k} \times \calr^{\ls k}\to C$ so that for $x\in X_0^{>k}$, we have
\[\sigma_{k, n}(e_x, h)=\bar{\fs}(e_x, h)^{-1}\bar{\fs}((V_nx, x), e_{y'})^{-1}\bar{\fs}(e_{V_nx}, h)\bar{\fs}((V_nx, x), e_y)\]
for $h=(y', y)\in \calr^{\ls k}$, where $V_n$ is regarded as an element of the full group $[\calr_0^{>k}]$ because $k<n$.
The map $\sigma_{k, n}$ detects non-commutativity of the lift of the diagram drawn in Figure \ref{fig-sq} (a) to $G\ltimes X$.
\begin{figure}
\begin{center}
\includegraphics[width=14cm]{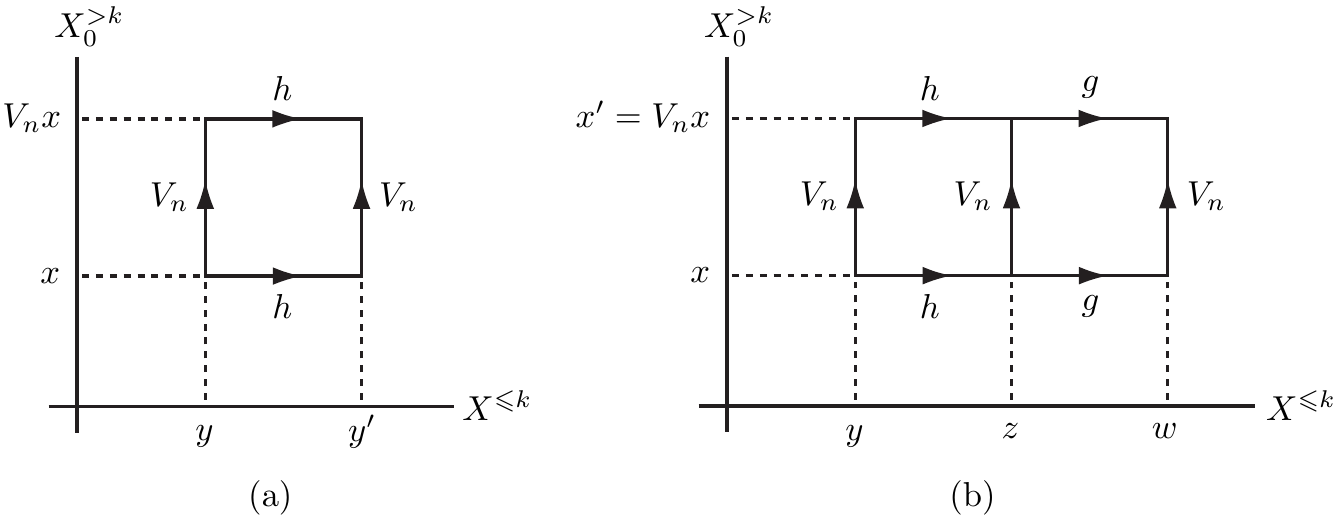}
\caption{}\label{fig-sq}
\end{center}
\end{figure}

\begin{lem}\label{lem-skn}
With the above notation, the following assertions hold:
\begin{enumerate}
\item[(i)] The set $\cals_{k, n}$ is a subrelation of $\cali_0^{>k}\times \calr^{\ls k}$.
\item[(ii)] For a.e.\ $x\in X_0^{>k}$, the subrelation $\cals_{k, n}^x$ approaches $\{ e_x\} \times \calr^{\ls k}$ as $n\to \infty$, namely, for any measurable subset $S\subset \{ e_x\} \times \calr^{\ls k}$ of finite measure, the measure of the set $S\setminus \cals_{k, n}^x$ converges to $0$ as $n\to \infty$. 
\item[(iii)] The restriction $\sigma_{k, n}\colon \cals_{k, n}\to C$ is a 1-cocycle, and for a.e.\ $x\in X_0^{>k}$, the restriction $\sigma_{k, n}\colon \cals_{k, n}^x\to C$ is a 1-cocycle.
\end{enumerate}
\end{lem}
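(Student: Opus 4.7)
My plan is to treat the three assertions separately. For (i), the set $\cals_{k,n}$ already lies inside the subrelation $\calr_0^{\ls k}\times\cali_0^{>k}\times\calr_1$ of $\calr$, so it suffices to verify closure under composition and inversion from the defining identity. Writing $z$ for the source and $r=\tau(h)z$ for the range of $h\in\cals_{k,n}$, the relation $\tau(h)V_nz=V_nr$ inverts to $\tau(h^{-1})V_nr=V_nz$, giving $h^{-1}\in\cals_{k,n}$; for composable $g,h\in\cals_{k,n}$ one chains the two relations to obtain $\tau(gh)V_nz=V_n\tau(gh)z$. Units trivially satisfy the identity.

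For (ii), the crucial input is that $V_n$ is asymptotically invariant: $\mu(V_nA\bigtriangleup A)\to 0$ for every measurable $A\subset X$, which follows immediately from the Bernoulli structure since $V_n$ only flips the $n$-th coordinate. I would first reformulate membership in $\cals_{k,n}^x$: for $h^{\ls k}\in\calr^{\ls k}$ with source $a$ and range $b$, define $\psi_{h^{\ls k}}(x)\in\Gamma$ as the unique element with $\psi_{h^{\ls k}}(x)(a,x)=(b,x)$ (by freeness of $\Gamma\c X$); tracing through the action of $V_n$ on $X$ shows that $(e_x,h^{\ls k})\in\cals_{k,n}^x$ iff $\psi_{h^{\ls k}}(x)=\psi_{h^{\ls k}}(V_nx)$. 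For any partial automorphism $W\in[\calr^{\ls k}]$, the measurable $\Gamma$-valued function $F_W(y,x):=\psi_{(Wy,y)}(x)$ decomposes $\mathrm{dom}(W)\times X_0^{>k}$ into countably many measurable pieces $F_W^{-1}(\gamma)$; approximating by finitely many pieces of almost full measure and applying asymptotic invariance of $V_n$ on each yields $\mu(\{z:F_W(z)\neq F_W(V_nz)\})\to 0$. Decomposing a finite-measure $S\subset\{e_x\}\times\calr^{\ls k}$ as a countable disjoint union of graphs of partial automorphisms and applying Fubini yields $L^1$-convergence of $\tilde\mu(S_x\setminus\cals_{k,n}^x)$ in $x$; a diagonal argument over a countable generating family for $\calr^{\ls k}$, together with approximation, promotes this to the a.e.\ statement for all finite-measure $S$.

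For (iii), the four $\bar\fs$-terms in $\sigma_{k,n}(e_x,h)$ lift the edges of the commuting square in $\calr$ with vertices $(y,x)$, $(y',x)$, $(y',V_nx)$, $(y,V_nx)$. Setting $\gamma_h=\tau(e_x,h)$, $\gamma_{h'}=\tau(e_{V_nx},h)$, $\gamma_{u_y}=\tau((V_nx,x),e_y)$, $\gamma_{u_{y'}}=\tau((V_nx,x),e_{y'})$, commutativity in $\Gamma$ gives $\gamma_{u_{y'}}\gamma_h=\gamma_{h'}\gamma_{u_y}$, and for $h\in\cals_{k,n}$ additionally $\gamma_h=\gamma_{h'}$. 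Using $\mathsf{s}(\alpha)\mathsf{s}(\beta)=\sigma(\alpha,\beta)\mathsf{s}(\alpha\beta)$ and centrality of $C$, a short direct computation gives
\[
\sigma_{k,n}(e_x,h)=\sigma(\gamma_h,\gamma_{u_y})\,\sigma(\gamma_{u_{y'}},\gamma_h)^{-1}\in C.
\]
For composable $g,h\in\cals_{k,n}$ (with $h$ of range $w$ and $g$ of range $v$), the 1-cocycle identity $\sigma_{k,n}(gh)=\sigma_{k,n}(g)\sigma_{k,n}(h)$ follows by applying the 2-cocycle identity for $\sigma$ to the triples $(\gamma_g,\gamma_h,\gamma_{u_y})$ and $(\gamma_{u_v},\gamma_g,\gamma_h)$ together with the square-commutation relations coming from $g,h\in\cals_{k,n}$. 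The restriction to each fiber $\cals_{k,n}^x$ is a 1-cocycle by the same identity.

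The main technical hurdle is the uniformity required in (ii): the natural argument via asymptotic invariance and Fubini gives only $L^1$-convergence in $x$ for a single finite-measure test set $S$, and strengthening this to the ``for a.e.\ $x$, for every $S$'' formulation of the lemma demands working with a countable dense family of test sets and handling the extension to all $S$ by approximation.
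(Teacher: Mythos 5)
Your proposal is correct and follows essentially the same route as the paper's proof: (i) by chaining the defining commutation relation through units, inverses and products; (ii) by asymptotic centrality/invariance of $(V_n)_n$ applied to the level sets of the $\Gamma$-valued cocycle, followed by a decomposition of $S$ into graphs and Fubini; and (iii) by the commuting square, the group $2$-cocycle identity and centrality of $C$ (your closed formula $\sigma_{k,n}(e_x,h)=\sigma(\gamma_h,\gamma_{u_y})\sigma(\gamma_{u_{y'}},\gamma_h)^{-1}$ is just the paper's $\bar{\fs}$-word computation repackaged). The quantifier subtlety you flag in (ii) is genuine but is equally present in the paper, which disposes of (ii) in one sentence; in the sequel only the integrated (equivalently, subsequential) form of (ii) is used to choose $n(k)$, so it costs nothing.
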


\begin{proof}
Fix $x\in X_0^{>k}$.
For the ease of the symbol, we put $\cals =\cals_{k, n}^x$.
For any $z\in \{ x\} \times X^{\ls k}$, we have $\tau(e_z)=e$ and $e_z\in \cals$ by the definition of $\cals$.
Pick $h\in \cals$ and let $z$ denote the source of $h$.
The element $\tau(h)z$ is the range of $h$, and the equation $V_n\tau(h^{-1})(\tau(h)z)=V_nz=\tau(h^{-1})V_n(\tau(h)z)$ holds, where the last equation follows from $h\in \cals$.
This implies $h^{-1}\in \cals$.
Pick an element $g\in \cals$ whose source is $\tau(h)z$, the range of $h$.
The equation $\tau(gh)V_nz=\tau(g)V_n\tau(h)z=V_n\tau(gh)z$ holds.
This implies $gh\in \cals$.
We showed that $\cals$ is an equivalence relation on the set $\{ x\} \times X^{\ls k}$, and assertion (i) follows.

Since the sequence $(V_n)_n$ is a.c.\ for $\calr$, for any $\gamma \in \Gamma$, the measure of the set of points $z\in X$ with $\gamma V_nz=V_n\gamma z$ approaches $1$.
Assertion (ii) follows.

We prove assertion (iii).
Figure \ref{fig-sq} (b) will be helpful.
Fix $x\in X_0^{>k}$.
Pick $g, h\in \calr^{\ls k}$ so that $g=(w, z)$ and $h=(z, y)$ with $y, z, w\in X^{\ls k}$.
Suppose that $(e_x, g)$ and $(e_x, h)$ are in $\cals_{k, n}^x$.
We set $x'=V_nx\in X_0^{>k}$, $\gamma =\tau(e_x, g)$ and $\delta =\tau(e_x, h)$.
It follows from $(e_x, h)\in \cals_{k, n}^x$ that $\delta(x', y)=\delta V_n(x, y)=V_n\delta(x, y)=V_n(x, z)=(x', z)$ and hence $\tau(e_{x'}, h)=\delta$.
It similarly follows from $(e_x, g)\in \cals_{k, n}^x$ that $\tau(e_{x'}, g)=\gamma$.
By the definition of the 2-cocycle $\sigma \colon \Gamma \times \Gamma \to C$, we have $\sigma(\gamma, \delta)\mathsf{s}(\gamma \delta)=\mathsf{s}(\gamma)\mathsf{s}(\delta)$.
By the definition of the section $\fs\colon \calr \to G\ltimes X$, we have $\sigma(\gamma, \delta)\fs(e_x, gh)=\fs(e_x, g)\fs(e_x, h)$ and $\sigma(\gamma, \delta)\fs(e_{x'}, gh)=\fs(e_{x'}, g)\fs(e_{x'}, h)$ because $\tau(e_{x'}, g)=\gamma$ and $\tau(e_{x'}, h)=\delta$.
It follows that
\begin{align*}
\sigma_{k, n}(e_x, gh)&=\bar{\fs}(e_x, gh)^{-1}\bar{\fs}((x', x), e_w)^{-1}\bar{\fs}(e_{x'}, gh)\bar{\fs}((x', x), e_y)\\
&=\sigma(\gamma, \delta)\bar{\fs}(e_x, h)^{-1}\bar{\fs}(e_x, g)^{-1}\bar{\fs}((x', x), e_w)^{-1}\bar{\fs}(e_{x'}, gh)\bar{\fs}((x', x), e_y)\\
&=\bar{\fs}(e_x, h)^{-1}\bar{\fs}(e_x, g)^{-1}\bar{\fs}((x', x), e_w)^{-1}\bar{\fs}(e_{x'}, g)\bar{\fs}(e_{x'}, h)\bar{\fs}((x', x), e_y)\\
&=\bar{\fs}(e_x, h)^{-1}\sigma_{k, n}(e_x, g)\bar{\fs}((x', x), e_z)^{-1}\bar{\fs}(e_{x'}, h)\bar{\fs}((x', x), e_y)\\
&=\sigma_{k, n}(e_x, g)\sigma_{k, n}(e_x, h).
\end{align*}
The former assertion in assertion (iii) follows.

There exists a countable subset $\calv \subset [\cals_{k, n}]$ such that $\bigcup_{V\in \calv}\gr(V)=\cals_{k, n}$, where we set $\gr(V)=\{ \, (Vz, z)\in \calr \mid z\in X\, \}$ for $V\in [\calr]$.
The former assertion implies that for a.e.\ $z\in X$, for any $U, V\in \calv$, the equation $\sigma_{k, n}(UVz, Vz)\sigma_{k, n}(Vz, z)=\sigma_{k, n}(UVz, z)$ holds.
By Fubini's theorem, for a.e.\ $x\in X_0^{>k}$, for a.e.\ $y\in X^{\ls k}$ and for any $U, V\in \calv$, the same equation holds with $z=(x, y)$.
The latter assertion in assertion (iii) follows.
\end{proof}

\begin{rem}\label{rem-2}
Let $\rho \colon \calr^{(2)}\to C$ be the 2-cocycle associated with the section $\fs \colon \calr \to G\ltimes X$.
In the proof of Lemma \ref{lem-skn} (iii), we implicitly use the following property of $\rho$:
There exists a countable subset $\mathcal{U}\subset [\calr]$ such that it generates a subrelation of $\calr$ containing $\cali_0^{>k}\times \calr^{\ls k}$ and for any $U, V\in \mathcal{U}$ and any $z=(x, y)\in X=X_0^{>k}\times X^{\ls k}$ with $(Vz, z), (UVz, Vz)\in \cali_0^{>k}\times \calr^{\ls k}$, the element $\rho((UVz, Vz), (Vz, z))\in C$ only depends on $U$, $V$ and $y$.
It is not obvious whether we can realize this for general 2-cocycles in $Z^2(\calr, C)$ or realize another property for them so that the restriction of $\sigma_{k, n}$ to some subrelation of $\cali_0^{>k}\times \calr^{\ls k}$, which should approach $\cali_0^{>k}\times \calr^{\ls k}$, is a 1-cocycle.
We also note that the condition that $\calr$ arises from a free action of a countable group is essential to define the subrelation $\cals_{k, n}$ with such a desired property.   
\end{rem}

We fix a decreasing sequence of positive numbers, $1>\varepsilon_1>\varepsilon_2>\cdots$ with $\varepsilon_k\to 0$ as $k\to \infty$.
We also fix a sequence of elements of $[\calr_1]$, $(T_k)_{k=1}^\infty$, such that $\calr_1=\bigcup_{k=1}^\infty\gr(T_k)$, where we define the set $\gr(T_k)$ similarly to that in the proof of Lemma \ref{lem-skn} (iii).
We will inductively find an increasing sequence of positive integers, $n(1)<n(2)<\cdots$, as follows:
We set $n(0)=0$.
For $k\in \N$, let $n(k)$ be a positive integer $n$ such that $n>n(k-1)$ and
\begin{equation}\label{gr}
\tilde{\mu}\left( \left(\bigcup_{i=1}^k(\gr(V_i)\cup \gr(T_i)) \right)\setminus \cals_{k, n} \right) <\varepsilon_k^2.
\end{equation}
Such an $n$ exists by Lemma \ref{lem-skn} (ii).
In what follows, we fix these $k$ and $n=n(k)$, and for the ease of the symbols, we drop $n$, that is, we denote $\cals_{k, n}$, $\cals_{k, n}^x$ and $\sigma_{k, n}$ by $\cals_k$, $\cals_k^x$ and $\sigma_k$, respectively.


\medskip

\noindent \textbf{Approximating the 1-cocycle $\sigma_k\colon \cals_k\to C$.} 
For $c=(b_1, \ldots, b_k, c_1,\ldots, c_k)\in C^{2k}$, we define a subset $X_c\subset X$ by
\[X_c=\{ \, z\in X\mid \forall i=1,\ldots, k,\ \sigma_k(V_iz, z)=b_i\ \textrm{and}\ \sigma_k(T_iz, z)=c_i\, \}.\]
The set $X$ is decomposed into the sets $X_c$ with $c\in C^{2k}$.
We approximate this decomposition by cylindrical subsets with respect to the product $X=X_0^{>k}\times X^{\leqslant k}$.
Let $F\subset C^{2k}$ be a finite subset such that
\begin{equation}\label{x_c}
\mu \left( \bigsqcup_{c\in F}X_c \right)>1-\varepsilon_k^2.
\end{equation}
For each $c\in F$, there exists a measurable subset $A_c\subset X$ which is a union of finitely many cylindrical subsets of $X=X_0^{>k}\times X^{\leqslant k}$ and satisfies
\begin{equation}\label{x_c-a_c}
\mu \left( X_c\bigtriangleup A_c\right) <\varepsilon_k^2 /|F|^2.
\end{equation}
By inequality (\ref{x_c}), we have
\begin{equation}\label{a_c}
\mu \left( \bigcup_{c\in F}A_c \right)>1-2\varepsilon_k^2.
\end{equation}
Let $\Delta$ be the rectangular decomposition of $X=X_0^{>k}\times X^{\leqslant k}$ generated by the sets $A_c$ with $c\in F$.
We then obtain the decomposition $X_0^{>k}=B_1\sqcup \cdots \sqcup B_m$ from $\Delta$.

We define three measurable subsets $E_1, E_2, E_3\subset X_0^{>k}$ as follows:
We set
\[E_1=\{ \, x\in X_0^{>k}\mid \mu^{\ls k}( \{ \, y\in X^{\ls k}\mid \forall i=1,\ldots, k,\ (V_iy, y), (T_iy, y)\in \bar{\cals}_k^x \, \} )>1-\varepsilon_k\, \},\]
where for $x\in X_0^{>k}$, we denote by $\bar{\cals}_k^x$ the subrelation of $\calr^{\ls k}$ with $\cals_k^x=\{ e_x\} \times \bar{\cals}_k^x$.
By inequality (\ref{gr}), we have $\mu_0^{>k}(E_1)>1-\varepsilon_k$.
For $x\in X_0^{>k}$ and a subset $S\subset X$, let us call the set $\{ \, y\in X^{\leqslant k}\mid (x, y)\in S\, \}$ the $x$-\textit{slice} of $S$.
For $c\in F$, we set
\[E_c=\{ \, x\in X_0^{>k}\mid \mu^{\leqslant k}\left( \, {\rm the}\ x\textrm{-}{\rm slice\ of}\ X_c\bigtriangleup A_c\right) <\varepsilon_k /|F|\, \} \quad \textrm{and}\quad E_2=\bigcap_{c\in F}E_c.\]
By inequality (\ref{x_c-a_c}), we have $\mu_0^{>k}(E_c)>1-\varepsilon_k/|F|$ for any $c\in F$ and hence $\mu_0^{>k}(E_2)>1-\varepsilon_k$.
Finally, we set
\[E_3=\left\{ \, x\in X_0^{>k}\relmiddle| \mu^{\ls k}\left( \, {\rm the}\ x\textrm{-}{\rm slice\ of}\ \bigcup_{c\in F}A_c\right)>1-\varepsilon_k\, \right\}.\]
By inequality (\ref{a_c}), we have $\mu_0^{>k}(E_3)>1-2\varepsilon_k$.
Set $E=E_1\cap E_2\cap E_3$.
We then have
\begin{equation}\label{e}
\mu_0^{>k}(E)>1-4\varepsilon_k.
\end{equation}

For $j=1,\ldots, m$, we will define a subrelation $\cals_k^j$ of $\calr^{\ls k}$ and a 1-cocycle $\sigma_k^j\colon \cals_k^j\to C$ as follows:
If the measure of the set $B_j\cap E$ is zero, then let $\cals_k^j$ be the trivial equivalence relation on $X^{\ls k}$, and let $\sigma_k^j$ be the trivial cocycle.
Suppose that $B_j\cap E$ has positive measure.
We pick a point $a\in B_j\cap E$ such that the restriction $\sigma_k\colon \cals_k^a\to C$ is a 1-cocycle.
Such an $a$ exists by Lemma \ref{lem-skn} (iii).
We set $\cals_k^j=\bar{\cals}_k^a$ and define the 1-cocycle $\sigma_k^j\colon \cals_k^j\to C$ by $\sigma_k^j(g)=\sigma_k(e_a, g)$ for $g\in \cals_k^j$.

\begin{lem}
For any $x\in E$, taking $j=1,\ldots, m$ with $x\in B_j$, we have
\begin{equation}\label{dx}
\mu^{\ls k}(D_x)>1-4\varepsilon_k,
\end{equation}
where we set
\begin{align*}
D_x= \{ \, y&  \in X^{\leqslant k}  \mid {\rm Putting}\ z=(x, y)\in X,\ {\rm we\ have}\ \forall i=1,\ldots, k,\\ 
&(V_iy, y), (T_iy, y)\in \cals_k^j,\ \sigma_k(V_iz, z)=\sigma_k^j(V_iy, y)\ {\rm and}\ \sigma_k(T_iz, z)=\sigma_k^j(T_iy, y),\\
& \hspace{16em}{\rm and\ have}\ z\in X_c\ {\rm for\ some}\ c\in F.\, \}.
\end{align*}
\end{lem}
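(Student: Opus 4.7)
The plan is to fix $x \in E \cap B_j$ (tacitly assuming $\mu_0^{>k}(B_j \cap E) > 0$, which holds for a.e.\ such $x$) and compare the $X^{\ls k}$-slices at $x$ with those at the reference point $a \in B_j \cap E$ used to define $\cals_k^j = \bar{\cals}_k^a$ and $\sigma_k^j(g) = \sigma_k(e_a, g)$. The key structural observation is that $x$ and $a$ lie in the same atom of the rectangular decomposition $\Delta$: since $\Delta$ is generated by the cylindrical sets $\{A_c\}_{c\in F}$, one can partition $X_0^{>k} = \bigsqcup_{j'}B_{j'}$ and $X^{\ls k} = \bigsqcup_i Y_i$ so that each $A_c$ is a union of rectangles $B_{j'}\times Y_i$. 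In particular $(A_c)_x = (A_c)_a$ for every $c\in F$, where $(\cdot)_x$ denotes the $x$-slice.

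The argument combines two sets of nearly full measure. First, put
\[Y_1=\{\, y\in X^{\ls k}\mid (V_iy, y),\ (T_iy, y)\in \cals_k^j \text{ for all } 1\le i\le k\, \}.\]
Because $a\in E_1$ and $\cals_k^j=\bar{\cals}_k^a$, the definition of $E_1$ gives $\mu^{\ls k}(Y_1)>1-\varepsilon_k$. Second, put
\[Y_2=\{\, y\in X^{\ls k}\mid \exists c\in F \text{ with } (x, y)\in X_c \text{ and } (a, y)\in X_c\, \}.\]
To bound $\mu^{\ls k}(Y_2)$: since $x\in E_3$, the set $A'=\bigcup_{c\in F}(A_c)_x$ satisfies $\mu^{\ls k}(A')>1-\varepsilon_k$. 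For $y\in A'\setminus Y_2$ there is some $c\in F$ with $y\in (A_c)_x=(A_c)_a$, and for every such $c$ we must have $(x, y)\notin X_c$ or $(a, y)\notin X_c$, whence
\[A'\setminus Y_2\subset \bigcup_{c\in F}\bigl[(A_c\bigtriangleup X_c)_x\cup (A_c\bigtriangleup X_c)_a\bigr].\]
Since $x, a\in E_2$, each of the $2|F|$ terms has measure $<\varepsilon_k/|F|$, giving $\mu^{\ls k}(A'\setminus Y_2)<2\varepsilon_k$ and hence $\mu^{\ls k}(Y_2)>1-3\varepsilon_k$.

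Finally, I would verify $Y_1\cap Y_2\subset D_x$. For $y\in Y_1\cap Y_2$, let $c=(b_1,\ldots,b_k,c_1,\ldots,c_k)\in F$ witness $y\in Y_2$. Membership $(V_iy,y),(T_iy,y)\in \cals_k^j$ and the existence of $c\in F$ with $z=(x,y)\in X_c$ come directly from $Y_1$ and $Y_2$. Under the natural identification of $\cali_0^{>k}\times \calr^{\ls k}$ with a subset of $\calr$, the element $(V_iz,z)$ corresponds to $(e_x,(V_iy,y))$, so the defining condition of $X_c$ together with $(a,y)\in X_c$ yields
\[\sigma_k(V_iz,z)=b_i=\sigma_k(e_a,(V_iy,y))=\sigma_k^j(V_iy,y),\]
and the analogous identity for $T_i$ with entry $c_i$ holds in the same way. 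Adding the two measure bounds gives $\mu^{\ls k}(D_x)\ge \mu^{\ls k}(Y_1\cap Y_2)>1-4\varepsilon_k$.

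The main obstacle is bookkeeping: coordinating the four sources of smallness ($E_1$, $E_2$, $E_3$, and the rectangular structure of $\Delta$) and correctly transporting the cocycle $\sigma_k$ between the coordinates $(x,y)$ and $(a,y)$ via the constants $b_i, c_i$ attached to $c\in F$. Once the slice identification $(A_c)_x=(A_c)_a$ is isolated, the remaining combinatorial estimates are routine.
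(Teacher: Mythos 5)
Your proof is correct and follows essentially the same route as the paper: the paper defines a single auxiliary set $D_x'$ (combining your $Y_1$, $Y_2$, and the $E_3$-condition) and shows $D_x'\subset D_x$ using exactly your key observation that $x$ and $a$ lie in the same atom of $\Delta$, so the $x$- and $a$-slices of each $A_c$ coincide and the cocycle values transport via membership of $(x,y)$ and $(a,y)$ in a common $X_c$. The measure bookkeeping ($\varepsilon_k$ from $E_1$, $2\varepsilon_k$ from $E_2$ at both $x$ and $a$, $\varepsilon_k$ from $E_3$) matches the paper's count of $4\varepsilon_k$.
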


\begin{proof}
Let $D_x'$ denote the set of all points $y\in X^{\ls k}$ satisfying the following three conditions:
\begin{itemize}
\item For any $i=1,\ldots, k$, we have $(V_iy, y), (T_iy, y)\in \bar{\cals}_k^a$;
\item the point $y$ belongs to neither the $a$-slice of the set $\bigcup_{c\in F}(X_c\bigtriangleup A_c)$ nor the $x$-slice of that set; and
\item the point $y$ belongs to the $a$-slice of the set $\bigcup_{c\in F}A_c$.
\end{itemize}
By the definition of the sets $E_1$, $E_2$ and $E_3$, we have $\mu^{\ls k}(D_x')>1-4\varepsilon_k$.
For the proof of the lemma, it suffices to show the inclusion $D_x'\subset D_x$.
Pick a point $y\in D_x'$.
There exists $c\in F$ such that $y$ belongs to the $a$-slice of $A_c$ and thus belongs to the $a$-slice of $X_c$.
Since $a$ and $x$ are in the same $B_j$, the points $w:=(a, y)$ and $z:=(x, y)$ are in the same rectangular piece of the decomposition $\Delta$.
It follows from $w\in A_c$ that $z\in A_c$, that is, $y$ belongs to the $x$-slice of $A_c$.
The point $y$ thus belongs to the $x$-slice of $X_c$.
It follows from $z, w\in X_c$ that $\sigma_k(V_iz, z)=\sigma_k(V_iw, w)$ for any $i=1,\ldots, k$.
By the definition of $\sigma_k^j$, we have $\sigma_k^j(V_iy, y)=\sigma_k(V_iw, w)$ and hence $\sigma_k(V_iz, z)=\sigma_k^j(V_iy, y)$.
Similar computation shows that $\sigma_k(T_iz, z)=\sigma_k^j(T_iy, y)$.
The inclusion $D_x'\subset D_x$ follows.
\end{proof}

Inequalities (\ref{e}) and (\ref{dx}) imply
\begin{equation}\label{d}
\mu(D)>(1-4\varepsilon_k)^2>1-8\varepsilon_k,
\end{equation}
where we set
\begin{align*}
D = \{ \, z & =(x, y) \in X=X_0^{>k}\times X^{\ls k} \mid {\rm Choosing}\ j=1,\ldots, m \ {\rm with}\ x\in B_j,\\
&{\rm we\ have}\ \forall i=1,\ldots, k,\ (V_iy, y), (T_iy, y)\in \cals_k^j,\ \sigma_k(V_iz, z)=\sigma_k^j(V_iy, y)\\
& {\rm and}\ \sigma_k(T_iz, z)=\sigma_k^j(T_iy, y),\ {\rm and\ have}\ z\in X_c\ {\rm for\ some}\ c\in F.\, \}.
\end{align*}


\medskip

\noindent \textbf{Construction of a desired extension.}
It is known that any abelian countable group can be embedded into a compact group with dense image.
For example, it is obtained as the Bohr compactification (\cite[\S 4.7]{f}).
We embed the central subgroup $C$ into a compact group $K$ with dense image.
Let $K$ be equipped with the Haar measure.

Fix $k\in \N$.
We also fix $j=1,\ldots, m(k)$, where $m=m(k)$ is the number of pieces in the partition $X_0^{>k}=B_1\sqcup \cdots \sqcup B_m$ arising from the rectangular decomposition $\Delta$ of $X$.
This was obtained in the process approximating the 1-cocycle $\sigma_k$.
Let this number denote by $m(k)$ to indicate the dependence on $k$.
We define the action of $\cals_k^j$ on $K$ by the formula $gb=\sigma_k^j(g)b$ for $g\in \cals_k^j$ and $b\in K$.
Let $\calr^{\ls k}\c Y_k^j$ be the action co-induced from the action $\cals_k^j\c K$.
For $y\in X^{\ls k}$, let $(\calr^{\ls k})^y$ denote the set of elements of $\calr^{\ls k}$ whose range is $y$, and we define an equivalence relation on $(\calr^{\ls k})^y$ so that two elements $g, h\in (\calr^{\ls k})^y$ are equivalent if and only if $g^{-1}h\in \cals_k^j$.
Let $(\calr^{\ls k})^y/\cals_k^j$ denote the set of equivalence classes of this equivalence relation.
The space $Y_k^j$ is written as
\[Y_k^j=\bigsqcup_{y\in X^{\ls k}}\prod_{(\calr^{\ls k})^y/\cals_k^j}K.\] 
We have the fibered product over $X^{\ls k}$, $X\times_{X^{\ls k}}Y_k^j$, equipped with the specified projection to the first coordinate in $X$.
This is naturally isomorphic to the direct product $X_0^{>k}\times Y_k^j$.
Let $\calr$ act on the space $X\times_{X^{\ls k}}Y_k^j$ through the projection from $\calr$ onto $\calr^{\ls k}$.
Namely, the action is given by $(v, g)(z, y)=(z', gy)$ for $z\in X$ and $y\in Y_k^j$ with the same projection to $X^{\ls k}$ and for $(v, g)\in \calr =\calr_0^{>k}\times \calr^{\ls k}$ whose source and range are $z$ and $z'$, respectively.

Define the fibered product over $X$:
\[Y_k=\prod_{j=1,\ldots, m(k):/X}\left( X\times_{X^{\ls k}}Y_k^j\right),\]
where the product symbol with the symbol $``/X"$ at its bottom means the fibered product of $X\times_{X^{\ls k}}Y_k^j$ over $X$ running through $j=1,\ldots, m(k)$. 
Let $\calr$ act on $Y_k$ diagonally, namely, the action $\calr \c Y_k$ is given by $g(y^j)_j=(gy^j)_j$ for $g\in \calr$ and an element $y^j\in X\times_{X^{\ls k}}Y_k^j$ whose projection to $X$ is the source of $g$.

Making $k$ run through all natural numbers, we define the fibered product over $X$:
\[\Omega =\prod_{k\in \N : /X}Y_k=\prod_{k\in \N : /X}\prod_{j=1,\ldots, m(k):/X}\left( X\times_{X^{\ls k}}Y_k^j\right).\]
Let $\eta$ be the probability measure on $\Omega$ naturally obtained through construction of fibered products.
Let $\calr$ act on $\Omega$ diagonally in the same manner as indicated in defining the action of $\calr$ on $Y_k$.
This action preserves $\eta$.
We have the p.m.p.\ action $\Gamma \c (\Omega, \eta)$ defined by the formula $\gamma \omega =(\gamma x, x)\omega$ for $\gamma \in \Gamma$ and an element $\omega \in \Omega$ whose projection to $X$ is $x$.
Let $G$ act on $(\Omega, \eta)$ through the quotient map $q\colon G\to \Gamma$, and let $\calg =G\ltimes (\Omega, \eta)$ be the associated transformation-groupoid.

Recall that we chose the section $\fs \colon \calr \to G\ltimes X$.
For $n\in \N$, we denote by $\tilde{V}_n$ the lift of $V_n\in [\calr_0\times \cali_1]$ contained in the image of $\fs$, namely, the map $\tilde{V}_n\colon X\to G$ is defined by the equation $(\tilde{V}_nz, z)=\fs(V_nz, z)$ for $z\in X$.
Unless there is no confusion, we use the same symbol $\tilde{V}_n\colon \Omega \to G$ to denote the element of $[\calg]$ obtained by composing the projection from $\Omega$ onto $X$.

In the rest of this section, we will construct a non-trivial a.c.\ sequence $(U_k)_k$ for $\calg$.
We obtain $U_k\in [\calg]$ by twisting $\tilde{V}_{n(k)}$ through appropriate multiplication by elements of $K$ depending on points of $\Omega$.
Our construction below is separated into the two cases where $C$ is finite or infinite, although that in the latter case is available for the former case.
We decided to separate them because the construction in the former case is somewhat simpler thanks to the equation $K=C$.

\medskip


\noindent \textbf{The case where $C$ is finite.}
The equation $K=C$ then holds.
Fix $k\in \N$.
We define a map $U_k\colon \Omega \to G$ as follows:
Pick $\omega \in \Omega$.
We have a unique $j=1,\ldots, m(k)$ such that the projection of $\omega$ to $X_0^{>k}$ belongs to $B_j$.
Recall that this $B_j$ is a piece of the partition of $X_0^{>k}$ obtained in the process of approximating the 1-cocycle $\sigma_k$ and hence depends on $k$.
Let $b_\omega \in C$ denote the \textit{base at the $Y_k^j$-coordinate} of $\omega$.
This precisely means the following:
The point $\omega \in \Omega$ is encoded by its projection to $X$ and elements $y_k^j\in Y_k^j$ having indices $k\in \N$ and $j=1,\ldots, m(k)$.
The element $y_k^j$ is moreover encoded by elements of $K=C$ indexed by left cosets of $\cals_k^j$ in $(\calr^{\ls k})^y$, where $y$ denotes the projection of $\omega$ to $X^{\ls k}$.
Let us call the element of $C$ indexed by the coset containing the unit $e_y\in (\calr^{\ls k})^y$ the \textit{base at the $Y_k^j$-coordinate} of $\omega$.
We set
\begin{equation}\label{u}
U_k(\omega)=\tilde{V}_{n(k)}(\omega)b_\omega.
\end{equation}

\begin{lem}\label{lem-comm}
Fix $k\in \N$ and $j=1,\ldots, m(k)$.
Pick $g\in \cals_k^j$ and $x\in B_j$, and let $y\in X^{\ls k}$ be the source of $g$.
We put $z=(x, y)\in X$ and $x'=V_{n(k)}x\in X_0^{>k}$.
Pick a point $\omega \in \Omega$ whose projection to $X$ is $z$ and put $\omega'=(e_x, g)\omega$.
Suppose that the equation $\sigma_k(e_x, g)=\sigma_k^j(g)$ holds.
Then we have the equation
\begin{equation}\label{su}
\bar{\fs}(e_x, g)^{-1}U_k(\omega')^{-1}\bar{\fs}(e_{x'}, g)U_k(\omega)=e.
\end{equation}
\end{lem}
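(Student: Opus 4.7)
The plan is to unpack the definition~(\ref{u}) of $U_k$ and show that the left-hand side of~(\ref{su}) equals $\sigma_k^j(g)^{-1}\sigma_k(e_x,g)$, which is trivial by hypothesis.

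First I would express each factor explicitly. Write $U_k(\omega)=\tilde V_{n(k)}(\omega)b_\omega$ and $U_k(\omega')=\tilde V_{n(k)}(\omega')b_{\omega'}$. Since $\tilde V_{n(k)}$ factors through the projection $\Omega\to X$ and, under the identification $\calr=\calr_0^{>k}\times\calr^{\ls k}$, the element $(V_{n(k)}z,z)$ corresponds to $((x',x),e_y)$, we have $\tilde V_{n(k)}(\omega)=\bar{\fs}((x',x),e_y)$. Denoting by $y''$ the range of $g$ (so that the projection of $\omega'$ to $X$ is $(x,y'')$), the same reasoning gives $\tilde V_{n(k)}(\omega')=\bar{\fs}((x',x),e_{y''})$.

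Next I would compute the relation between $b_\omega$ and $b_{\omega'}$. The $Y_k^j$-coordinate of $\omega$ is some $f\in\Sigma_y$ in the notation of the preliminary subsection on co-induced actions, and $b_\omega=f(e_y)$. Because the action of $(e_x,g)\in\calr$ on the $Y_k^j$-factor is defined through the projection $\calr\to\calr^{\ls k}$, i.e., it acts by $g\in\cals_k^j$, the $Y_k^j$-coordinate of $\omega'$ is $gf\in\Sigma_{y''}$. Applying the $\cals_k^j$-equivariance of the projection $p\colon\Sigma\to K$ together with the definition $gb=\sigma_k^j(g)b$ of the $\cals_k^j$-action on $K$ yields
\[b_{\omega'}=(gf)(e_{y''})=g\cdot f(e_y)=\sigma_k^j(g)b_\omega.\]

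Substituting these expressions into the left-hand side of~(\ref{su}) and using centrality of $C$ in $G$ to collect $b_\omega$, $b_\omega^{-1}$, and $\sigma_k^j(g)^{-1}$ to the left, the left-hand side becomes
\[\sigma_k^j(g)^{-1}\cdot\bar{\fs}(e_x,g)^{-1}\bar{\fs}((x',x),e_{y''})^{-1}\bar{\fs}(e_{x'},g)\bar{\fs}((x',x),e_y).\]
The product after $\sigma_k^j(g)^{-1}$ is precisely $\sigma_k(e_x,g)$ by its definition (applied to $h=g$, for which the target $y'$ in the definition equals our $y''$). The hypothesis $\sigma_k(e_x,g)=\sigma_k^j(g)$ then forces the whole expression to equal $e$.

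There is no serious obstacle; the lemma is a bookkeeping verification. The main care is required in matching $V_{n(k)}z$ for $z=(x,y)$ with $((x',x),e_y)$ under the identification of $\calr$ as a product, and in identifying the base $b_\omega$ with the value at $e_y$ of the corresponding element of $\Sigma_y$, so that the $\cals_k^j$-equivariance of $p$ applies directly and produces exactly the factor $\sigma_k^j(g)$ needed to cancel $\sigma_k(e_x,g)$.
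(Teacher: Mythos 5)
Your proposal is correct and follows essentially the same route as the paper's proof: both reduce the left-hand side of (\ref{su}) to $\sigma_k(e_x,g)\,b_{\omega'}^{-1}b_\omega$ using centrality of $C$ and the identity $b_{\omega'}=\sigma_k^j(g)b_\omega$ coming from the co-induced action, and then invoke the hypothesis $\sigma_k(e_x,g)=\sigma_k^j(g)$. Your extra bookkeeping (identifying $\tilde V_{n(k)}(\omega)$ with $\bar{\fs}((x',x),e_y)$ and invoking the $\cals_k^j$-equivariance of the projection $p$) just makes explicit the steps the paper compresses into ``follows from the definition of $\sigma_k$.''
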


Recall that the map $\bar{\fs}\colon \calr \to G$ is the composition of the section $\fs \colon \calr \to G\ltimes X$ and the projection from $G\ltimes X$ onto $G$.
Note that if $g=(V_iy, y)$ or $(T_iy, y)$ for some $i=1,\ldots, k$, then by inequality (\ref{d}), the equation $\sigma_k(e_x, g)=\sigma_k^j(g)$ holds for any $z$ outside a subset of $X$ with small measure.

\begin{proof}[Proof of Lemma \ref{lem-comm}]
Let $b, b'\in C$ be the bases at the $Y_k^j$-coordinates of $\omega$ and $\omega'$, respectively.
Recall that $Y_k^j$ is the space on which $\calr^{\ls k}$ acts, and this action is co-induced from the action $\cals_k^j$ on $C$ defined by the 1-cocycle $\sigma_k^j\colon \cals_k^j\to C$.
The equivalence relation $\calr$ acts on the space $X\times_{X^{\ls k}} Y_k^j$ through the projection onto $\calr^{\ls k}$.
The equation $\omega'=(e_x, g)\omega$ therefore implies that $b'=\sigma_k^j(g)b$.
The equation
\begin{align*}
&\bar{\fs}(e_x, g)^{-1}U_k(\omega')^{-1}\bar{\fs}(e_{x'}, g)U_k(\omega)\\
= \ & \bar{\fs}(e_x, g)^{-1}\tilde{V}_{n(k)}(\omega')^{-1}\bar{\fs}(e_{x'}, g)\tilde{V}_{n(k)}(\omega){b'}^{-1}b=\sigma_k(e_x, g){b'}^{-1}b=\sigma_k^j(g){b'}^{-1}b=e
\end{align*}
then holds, where the second equation follows from the definition of $\sigma_k$.
\end{proof}

We are now ready to check the Jones-Schmidt condition for the sequence $(U_k)_k$.

\begin{lem}\label{lem-c-finite}
With the above notation, the following assertions hold:
\begin{enumerate}
\item[(i)] For any measurable subset $A\subset \Omega$, we have $\eta(U_k^\circ A\bigtriangleup A)\to 0$ as $k\to \infty$.
\item[(ii)] The sequence $(U_k)_k$ asymptotically commutes with any element of $[\calg]$.
\item[(iii)] There exists an a.i.\ sequence $(A_k)_k$ for $\calg$ such that $U_k^\circ A_k=\Omega \setminus A_k$ for any $k\in \N$.
\end{enumerate}
The sequence $(U_k)_k$ is therefore a non-trivial a.c.\ sequence for $\calg$.
\end{lem}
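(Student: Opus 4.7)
The key observation is that since $C = \ker q$, the $G$-action on $\Omega$ factors through $\Gamma$, so $U_k^\circ \omega = V_{n(k)}\omega$, where the right-hand side denotes the $\calr$-action of $V_{n(k)} \in [\calr]$ on $\Omega$. This reduces assertions (i) and (iii) to properties of the sequence $(V_n)_n$. For (i) I would approximate any measurable $A \subset \Omega$ by a cylinder set depending on only finitely many $Y_{k'}^j$ factors (say $k' \leq K$) and on a measurable subset of $X$ determined by finitely many $X_0$-coordinates. Once $n(k) > K$ and $n(k)$ exceeds the coordinates involved, $V_{n(k)}$ acts on each relevant $Y_{k'}^j$ with trivial $\calr^{\ls k'}$-component and therefore preserves the fiber variable (via the action formula $(v, g)(z, y) = (z', gy)$), while on the $X$-projection it merely flips a coordinate outside the approximation. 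The cylinder is thus invariant, yielding (i).

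For (ii) I invoke Lemma \ref{lem-cent} applied to the free, p.m.p.\ action $\Gamma \c (\Omega, \eta)$, taking as generating set $\calv = \{V_i, T_i : i \in \N\}$ viewed inside $[\calr(\Gamma \c \Omega)]$ through the $\calr$-action; this generates because $(V_i)_i$ spans $\calr_0$, $(T_i)_i$ spans $\calr_1$, and the diagonal construction of $\Omega$ transfers generation to $\calr(\Gamma \c \Omega)$. Condition (a) of the lemma is exactly (i). For condition (b), at a point $\omega$ with $\pi_X(\omega) = z = (x, y)$, setting $g = (V_i y, y)$ and $x' = V_{n(k)}x$, the identifications $\tilde{V}_i(\omega) = \bar{\fs}(e_x, g)$ and $\tilde{V}_i(V_{n(k)}\omega) = \bar{\fs}(e_{x'}, g)$ turn the desired identity $(U_k \cdot \tilde{V}_i)(\omega) = (\tilde{V}_i \cdot U_k)(\omega)$ into precisely the equation produced by Lemma \ref{lem-comm}. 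Its hypotheses $(x \in B_j,\ g \in \cals_k^j,\ \sigma_k(e_x, g) = \sigma_k^j(g))$ are met whenever $z$ lies in the set $D$ from (\ref{d}), which has $\mu$-measure greater than $1 - 8\varepsilon_k$; the reasoning for $\tilde{T}_i$ is identical with $g = (T_i y, y)$. Lemma \ref{lem-cent} then promotes the resulting pointwise commutativity to asymptotic commutativity with all of $[\calg]$.

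For (iii) I take $A_k = \{\omega \in \Omega : \text{the } n(k)\text{-th coordinate of the } X_0\text{-projection of } \omega \text{ vanishes}\}$; then $\eta(A_k) = 1/2$ and, since $V_{n(k)}$ flips that coordinate, $U_k^\circ A_k = \Omega \setminus A_k$. Asymptotic invariance reduces via $A_k = \pi_X^{-1}(A_k')$ to showing $\mu(\gamma A_k' \triangle A_k') \to 0$ for each $\gamma \in \Gamma$. The decomposition $\calr = \calr_0 \times \calr_1$ together with freeness of $\bigoplus_\N \Z/2\Z \c X_0$ furnishes a measurable map $v(\gamma, \cdot) \colon X \to \bigoplus_\N \Z/2\Z$ recording the $\calr_0$-component of $\gamma$'s action. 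For any $\varepsilon > 0$, this map takes values in a finite subset $F_\varepsilon$ on a set of $\mu$-measure greater than $1 - \varepsilon$, and once $n(k)$ exceeds the coordinate range of $F_\varepsilon$, the set $A_k'$ is $\gamma$-invariant on that set, so $\mu(\gamma A_k' \triangle A_k') < 2\varepsilon$.

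The main obstacle lies in (ii): one must simultaneously unwind the definition of $U_k$ with its correction $b_\omega$, the lift $\tilde{V}_i$ through $\fs$, the $\calr$-action on $\Omega$, and the composition in $[\calg]$ in order to recognize Lemma \ref{lem-comm} as furnishing exactly the pointwise identity needed for asymptotic commutativity. Once this translation is clear, the measure estimate (\ref{d}) delivers the required convergence, and the three assertions assemble through Theorem \ref{thm-js}'s criterion into the claim that $(U_k)_k$ is a non-trivial a.c.\ sequence for $\calg$.
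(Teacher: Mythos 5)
Your proposal follows the paper's proof essentially verbatim: the same observation that $U_k^\circ=\tilde{V}_{n(k)}^\circ$ moves only the $n(k)$-th $X_0$-coordinate of $\Omega$ gives (i) and (iii) with the same choice of $A_k$, and (ii) is obtained exactly as in the paper by combining the pointwise identity of Lemma \ref{lem-comm} on the set of points lying over $D$ (of measure greater than $1-8\varepsilon_k$ by inequality (\ref{d})) with Lemma \ref{lem-cent} applied to the generating family $\{\tilde{V}_i, \tilde{T}_i\}$. The only, harmless, difference is that you spell out the asymptotic invariance of $(A_k)_k$ in (iii), which the paper dispatches by noting that it is the pullback of an a.i.\ sequence for $\calr$.
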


\begin{proof}
We first show assertions (i) and (iii).
These two assertions almost follow from the construction of the action $\Gamma \c \Omega$.
Fix $l\in \N$ and $j=1,\ldots, m(l)$.
For any $k>l$, we have $n(k)>l$ and the transformation $U_k^\circ=\tilde{V}_{n(k)}^\circ$ on $\Omega$ fixes coordinate in $Y_l^j$ because $Y_l^j$ is the space on which $\calr$ acts through the projection onto $\calr^{\ls l}$, and $V_{n(k)}$ is in the full group of the kernel of that projection.
It follows that $U_k^\circ$ acts on the fibered product $X\times_{X^{\ls l}}Y_l^j$ by only exchanging the coordinate indexed by $n(k)$ in $X_0=\prod_\N \Z /2\Z$.
Assertion (i) follows.

For $k\in \N$, we define a subset $A_k\subset \Omega$ as the set of points of $\Omega$ whose projection to $X_0$ has $0$ in the coordinate indexed by $n(k)$.
We have the projection from $\calg$ onto $\calr$ that induces the projection from $\Omega$ onto $X$ between their unit spaces.
By its definition, the sequence $(A_k)_k$ is the inverse image of an a.i.\ sequence for $\calr$, and it is therefore a.i.\ for $\calr$.
As mentioned in the proof of assertion (i), the transformation $U_k^\circ =\tilde{V}_{n(k)}^\circ$ acts on $\Omega$ by exchanging the coordinate indexed by $n(k)$ in $X$.
The equation $U_k^\circ A_k=\Omega \setminus A_k$ therefore holds.
Assertion (iii) follows.

Finally, we show assertion (ii).
The inequalities and lemmas we prepared will be used in this proof.
For each $n\in \N$, we originally define $V_n$ as an element of $[\calr_0\times \cali_1]$ and obtain an element $\tilde{V}_n$ of $[\calg]$ by using the section $\fs \colon \calr \to \calg$.
Similarly we associate to the element $T_n\in [\cali_0\times \calr_1]$ the element of $[\calg]$, $\tilde{T}_n\colon \Omega \to G$, defined so that for a point $\omega \in \Omega$ whose projection to $X$ is $z$, we have $\tilde{T}_n(\omega)=\bar{\fs}(T_nz, z)$.
We also have $\tilde{V}_n(\omega)=\bar{\fs}(V_nz, z)$.

Fix $k\in \N$ and $i=1,\ldots, k$.
We put $W=\tilde{V}_i$ or $\tilde{T}_i$. 
Let $D$ be the set in inequality (\ref{d}).
We claim that $(W\cdot U_k)\omega =(U_k\cdot W)\omega$ for any $\omega \in \Omega$ whose projection to $X$ is in $D$.
Pick such a point $\omega \in \Omega$ and let $z=(x, y)\in D$ denote the projection to $X$.
We have a unique $j=1,\ldots, m(k)$ with $x\in B_j$. 
The condition $z\in D$ implies that $g:=(Wy, y)\in \cals_k^j$ and $\sigma_k(e_x, g)=\sigma_k^j(g)$.
We put $x'=V_{n(k)}x\in X_0^{>k}$ and $\omega'=(e_x, g)\omega \in \Omega$.
By the definition of $\tilde{V}_i$ and $\tilde{T}_i$, we have $W(\omega)=\bar{\fs}(e_x, g)$ and $W(U_k^\circ \omega)=\bar{\fs}(e_{x'}, g)$.
The latter equation holds because the projection of $U_k^\circ \omega \in \Omega$ to $X$ is $(x', y)$.
Equation (\ref{su}) in Lemma \ref{lem-comm} says that $W(U_k^\circ \omega)U_k(\omega)=U_k(\omega')W(\omega)$, and our claim follows.
Assertion (ii) follows from the shown claim, inequality (\ref{d}) and Lemma \ref{lem-cent}.
\end{proof}


\noindent \textbf{The case where $C$ is infinite.} 
Recall that the central subgroup $C$ is embedded into a compact group $K$ with dense image.
As well as in the case where $C$ is finite, for each $k\in \N$, we define $U_k\in [\calg]$ by twisting $\tilde{V}_{n(k)}$ through appropriate multiplication by elements of $C$ depending on points of $\Omega$.
Note that we must not multiply $\tilde{V}_{n(k)}$ by general elements of $K$ because $K$ is not in $G$.
To realize the definition of $U_k$ close to equation (\ref{u}), we use that the equivalence relation given by left multiplication of $C$ on $K$ is hyperfinite.
This approximation process is the only additional task in the case where $C$ is infinite.

Fix $k\in \N$.
We chose the finite subset $F\subset C^{2k}$ in inequality (\ref{x_c}).
Let $\bar{F}\subset C$ be the set of all elements in some coordinate of some element of $F$.
Let $\mu_K$ be the Haar measure on $K$, and let $C$ act on $(K, \mu_K)$ by left multiplication.
We denote by $\calr_{C, K}$ the equivalence relation associated with this action.
Since $C$ is abelian and $\calr_{C, K}$ is thus hyperfinite (\cite{dye1}), there exists a finite subrelation $\calt$ of $\calr_{C, K}$ such that
\begin{equation}\label{l}
\mu_K(L)>1-\varepsilon_k/m(k),\ {\rm where\ we\ set}\ L=\{ \, b\in K\mid \forall c\in \bar{F},\ (cb, b)\in \calt \, \}.
\end{equation}
Let $\Sigma \subset K$ be a fundamental domain of $\calt$, namely, $\Sigma$ is a measurable subset of $K$ such that any equivalence class of $\calt$ intersects $\Sigma$ at exactly one point.
We note that for any $b\in K$, there exists a unique $c\in C$ such that $c^{-1}b\in \Sigma$ and $(c^{-1}b, b)\in \calt$.

We define $U_k\in [\calg]$ as follows:
Pick a point $\omega \in \Omega$ and $j=1,\ldots, m(k)$ such that the projection of $\omega$ to $X_0^{>k}$ belongs to $B_j$.
Let $b=b_\omega \in K$ be the base at the $Y_k^j$-coordinate of $\omega$.
Find a unique $c=c_b\in C$ such that $c^{-1}b\in \Sigma$ and $(c^{-1}b, b)\in \calt$.
We set
\begin{equation}\label{u-c-infinite}
U_k(\omega)=\tilde{V}_{n(k)}(\omega)c.
\end{equation}
Adding an assumption on a point of $\Omega$, we obtain the following similar to Lemma \ref{lem-comm}.

\begin{lem}\label{lem-comm-2}
Fix $k\in \N$ and $j=1,\ldots, m(k)$.
Pick $g\in \cals_k^j$ and $x\in B_j$, and let $y\in X^{\ls k}$ be the source of $g$.
We put $z=(x, y)\in X$ and $x'=V_{n(k)}x\in X_0^{>k}$.
Pick a point $\omega \in \Omega$ whose projection to $X$ is $z$ and put $\omega'=(e_x, g)\omega$.
Let $b\in K$ be the base at the $Y_k^j$-coordinate of $\omega$.
Suppose that the equation $\sigma_k(e_x, g)=\sigma_k^j(g)$ holds and that $(\sigma_k^j(g)b, b)\in \calt$.
Then we have the equation
\begin{equation}\label{su2}
\bar{\fs}(e_x, g)^{-1}U_k(\omega')^{-1}\bar{\fs}(e_{x'}, g)U_k(\omega)=e.
\end{equation}
\end{lem}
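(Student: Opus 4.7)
The plan is to mimic the proof of Lemma \ref{lem-comm}, but to carefully track what happens to the base at the $Y_k^j$-coordinate when we pass from $\omega$ to $\omega'$ and then extract the correct element of $C$ through the fundamental domain $\Sigma$ of $\calt$.

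First, as in the proof of Lemma \ref{lem-comm}, since the action $\calr^{\ls k}\c Y_k^j$ is co-induced from the action $\cals_k^j\c K$ in which $g\in \cals_k^j$ acts on $K$ as left multiplication by $\sigma_k^j(g)$, the base $b'\in K$ at the $Y_k^j$-coordinate of $\omega'=(e_x, g)\omega$ is related to the base $b$ at the $Y_k^j$-coordinate of $\omega$ by $b'=\sigma_k^j(g)b$.

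Next I would exploit the extra hypothesis $(\sigma_k^j(g)b, b)\in \calt$: it says that $b'$ and $b$ lie in the same $\calt$-equivalence class, hence they have the same representative in the fundamental domain $\Sigma$. Let $c_b, c_{b'}\in C$ denote the unique elements with $c_b^{-1}b, c_{b'}^{-1}b'\in \Sigma$ and $(c_b^{-1}b, b), (c_{b'}^{-1}b', b')\in \calt$, as in the definition of $U_k$. Equality $c_{b'}^{-1}b'=c_b^{-1}b$ combined with $b'=\sigma_k^j(g)b$ forces $c_{b'}=\sigma_k^j(g)c_b$.

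Finally, using (\ref{u-c-infinite}) and the fact that $c_b, c_{b'}\in C$ are central in $G$, I compute
\begin{align*}
&\bar{\fs}(e_x, g)^{-1}U_k(\omega')^{-1}\bar{\fs}(e_{x'}, g)U_k(\omega)\\
=\ & c_{b'}^{-1}\bar{\fs}(e_x, g)^{-1}\tilde{V}_{n(k)}(\omega')^{-1}\bar{\fs}(e_{x'}, g)\tilde{V}_{n(k)}(\omega)c_b\\
=\ & c_{b'}^{-1}c_b\,\sigma_k(e_x, g)=c_{b'}^{-1}c_b\,\sigma_k^j(g)=e,
\end{align*}
where the middle equality uses the defining formula of $\sigma_k$ in the same way as in the proof of Lemma \ref{lem-comm} (via the identities $\tilde{V}_{n(k)}(\omega)=\bar{\fs}((x', x), e_y)$ and $\tilde{V}_{n(k)}(\omega')=\bar{\fs}((x', x), e_{y'})$), and the last equality uses the two hypotheses on $g$ and $b$.

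There is no serious obstacle here beyond the bookkeeping; the only genuinely new ingredient compared with Lemma \ref{lem-comm} is the observation that the assumption $(\sigma_k^j(g)b, b)\in \calt$ guarantees that the fundamental-domain assignment $b\mapsto c_b$ transforms by $b\mapsto \sigma_k^j(g)b$, which is precisely what is needed to cancel the non-trivial contribution of $\sigma_k^j(g)$ inside $C$.
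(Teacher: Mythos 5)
Your proposal is correct and follows essentially the same route as the paper's proof: establish $b'=\sigma_k^j(g)b$ via the $\cals_k^j$-equivariance of the base projection, use the hypothesis $(\sigma_k^j(g)b,b)\in\calt$ to conclude that $c_b^{-1}b$ and $c_{b'}^{-1}b'$ coincide in the fundamental domain $\Sigma$ (hence $c_{b'}^{-1}c_b=\sigma_k^j(g)^{-1}$), and then cancel against $\sigma_k(e_x,g)=\sigma_k^j(g)$ in the centrality computation. The paper phrases the cancellation as $c'^{-1}c=b'^{-1}b$ rather than $c_{b'}=\sigma_k^j(g)c_b$, but this is the same fact.
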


\begin{proof}
Let $b'\in K$ be the base at the $Y_k^j$-coordinate of $\omega'$.
We have $\sigma_k^j(g)b=b'$ and thus $(b', b)\in \calt$.
Put $c=c_b$ and $c'=c_{b'}$.
The condition $(c^{-1}b, b), ({c'}^{-1}b', b'), (b', b)\in \calt$ implies $(c^{-1}b, {c'}^{-1}b')\in \calt$.
The points $c^{-1}b$ and ${c'}^{-1}b'$ belong to $\Sigma$, and thus $c^{-1}b={c'}^{-1}b'$ because $\Sigma$ is a fundamental domain of $\calt$.
The equation
\begin{align*}
&\bar{\fs}(e_x, g)^{-1}U_k(\omega')^{-1}\bar{\fs}(e_{x'}, g)U_k(\omega)\\
= \ & \bar{\fs}(e_x, g)^{-1}\tilde{V}_{n(k)}(\omega')^{-1}\bar{\fs}(e_{x'}, g)\tilde{V}_{n(k)}(\omega){c'}^{-1}c=\sigma_k(e_x, g){b'}^{-1}b=\sigma_k^j(g){b'}^{-1}b=e
\end{align*}
therefore holds.
\end{proof}

For $k\in \N$, we define a subset $\Omega_k\subset \Omega$ as the set of points $\omega$ such that the projection of $\omega$ to $X$ is in $D$ and the base of the $Y_k^j$-coordinate of $\omega$ is in $L$ for any $j=1,\ldots, m(k)$.
This is a measurable subset of $\Omega_k$ with
\begin{equation}\label{omega_k}
\eta(\Omega_k)\geq \mu(D)\mu_K(L)^{m(k)}>(1-8\varepsilon_k)(1-\varepsilon_k)>1-9\varepsilon_k,
\end{equation}
where the second inequality follows from inequalities (\ref{d}) and (\ref{l}).

\begin{lem}\label{lem-c-infinite}
For the map $U_k$ in equation (\ref{u-c-infinite}), the sequence $(U_k)_k$ satisfies the same properties (i)--(iii) as those in Lemma \ref{lem-c-finite}.
The sequence $(U_k)_k$ is therefore a non-trivial a.c.\ sequence for $\calg$.
\end{lem}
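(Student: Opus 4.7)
The plan is to follow the proof of Lemma \ref{lem-c-finite} step by step, with Lemma \ref{lem-comm-2} taking the role of Lemma \ref{lem-comm} and with the set $\Omega_k$ providing the measure control in place of the preimage of $D$ alone. The key observation underpinning everything is that the multiplier $c = c_{b_\omega} \in C$ appearing in (\ref{u-c-infinite}) lies in $\ker q$, so it acts trivially on $\Omega$ and hence $U_k^\circ = \tilde{V}_{n(k)}^\circ$ as automorphisms of $(\Omega, \eta)$. From this, assertions (i) and (iii) are obtained verbatim from the argument in Lemma \ref{lem-c-finite}: since $V_{n(k)}$ acts only on the $n(k)$-th coordinate of $X_0$ and is in the kernel of the projection onto $\calr^{\ls l}$ for every $l < n(k)$, the transformation $U_k^\circ$ fixes all $Y_l^j$-coordinates with $l < k$ and flips only the $n(k)$-th entry of $X_0$; taking $A_k$ to be the set of $\omega \in \Omega$ whose projection to $X_0$ has $0$ in the $n(k)$-th coordinate produces the desired non-trivial a.i.\ sequence with $U_k^\circ A_k = \Omega \setminus A_k$.

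For (ii) I would invoke Lemma \ref{lem-cent} applied to the free p.m.p.\ action $\Gamma \c (\Omega, \eta)$, with the section of $G \ltimes \Omega \to \calr(\Gamma \c \Omega)$ induced by $\fs$. Since $\{V_i\} \cup \{T_i\}$ generates $\calr$ and $\calr(\Gamma \c \Omega)$ is the pullback of $\calr$ along $\Omega \to X$, their natural lifts generate $\calr(\Gamma \c \Omega)$, and the associated elements of $[\calg]$ are precisely $\tilde{V}_i$ and $\tilde{T}_i$. It therefore suffices, for each fixed $i$ and $W \in \{\tilde{V}_i, \tilde{T}_i\}$, to check that $(W \cdot U_k)(\omega) = (U_k \cdot W)(\omega)$ on a subset of $\Omega$ of $\eta$-measure tending to $1$. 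For $\omega \in \Omega_k$ with $X$-projection $z = (x, y) \in D$ and $x \in B_j$, the definition of $D$ forces $g := (Wy, y) \in \cals_k^j$ with $\sigma_k(e_x, g) = \sigma_k^j(g)$, and $z \in X_c$ for some $c \in F$ forces $\sigma_k^j(g) \in \bar{F}$; since the base $b$ at the $Y_k^j$-coordinate of $\omega$ lies in $L$ by the definition of $\Omega_k$, we conclude $(\sigma_k^j(g)b, b) \in \calt$. All hypotheses of Lemma \ref{lem-comm-2} are then met, and (\ref{su2}) rearranges, via $\bar{\fs}(e_x, g) = W(\omega)$ and $\bar{\fs}(e_{x'}, g) = W(U_k^\circ \omega)$, to the required commutation identity. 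Coupled with (\ref{omega_k}) this yields asymptotic commutation with each generator, and Lemma \ref{lem-cent} delivers (ii).

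The only genuine novelty over the finite-$C$ case is the rounding $b \mapsto c_b$ into the fundamental domain $\Sigma$ of the hyperfinite subrelation $\calt$: this rounding is compatible with the commutation identity only when $(\sigma_k^j(g)b, b) \in \calt$, and controlling this failure across all $m(k)$ fibers simultaneously is what forces the dependence $\mu_K(L) > 1 - \varepsilon_k/m(k)$ in (\ref{l}) and the resulting product bound (\ref{omega_k}). I expect the main obstacle to be precisely this simultaneous tracking of $m(k)$-many independent small-measure events, but the calibration is already built into the definitions of $L$ and $\Omega_k$, so once Lemma \ref{lem-comm-2} is applied the rest of the argument is a routine adaptation of Lemma \ref{lem-c-finite}.
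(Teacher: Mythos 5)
Your proposal is correct and follows essentially the same route as the paper: (i) and (iii) carry over verbatim because the central multiplier $c_{b_\omega}\in C=\ker q$ acts trivially so $U_k^\circ=\tilde{V}_{n(k)}^\circ$, and (ii) is obtained by verifying the hypotheses of Lemma \ref{lem-comm-2} on $\Omega_k$ (using the definitions of $D$ and $L$) and then applying inequality (\ref{omega_k}) together with Lemma \ref{lem-cent}. Your remarks on the role of the fundamental domain $\Sigma$ and the $\varepsilon_k/m(k)$ calibration in (\ref{l}) accurately identify the only new ingredient relative to the finite case.
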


\begin{proof}
For assertions (i) and (iii), the same proof as those in Lemma \ref{lem-c-finite} is available.
We show assertion (ii): The sequence $(U_k)_k$ asymptotically commutes with any element of $[\calg]$.

Fix $k\in \N$ and $i=1,\ldots, k$.
Put $W=\tilde{V}_i$ or $\tilde{T}_i$.
We claim that $(W\cdot U_k)\omega =(U_k\cdot W)\omega$ for any $\omega \in \Omega_k$. 
By inequality (\ref{omega_k}) and Lemma \ref{lem-cent}, this ends the proof of assertion (ii).
Pick $\omega \in \Omega_k$ and let $z=(x, y)\in D$ be the projection of $\omega$ to $X$.
We have a unique $j=1,\ldots, m(k)$ with $x\in B_j$.
Let $b\in K$ be the base at the $Y_k^j$-coordinate of $\omega$, which is in $L$ because $\omega \in \Omega_k$.
It follows from $z\in D$ that we have $g:=(Wy, y)\in \cals_k^j$, $\sigma_k(e_x, g)=\sigma_k^j(g)$ and $z\in X_c$ for some $c\in F$, and therefore have $\sigma_k^j(g)\in \bar{F}$.
It follows from $b\in L$ that $(\sigma_k^j(g)b, b)\in \calt$.
We now apply Lemma \ref{lem-comm-2}.
Through equation (\ref{su2}), the claim is proved along a verbatim translation of the proof of Lemma \ref{lem-c-finite} (ii). 
\end{proof}

By Lemmas \ref{lem-c-finite} and \ref{lem-c-infinite}, the groupoid $\calg =G\ltimes (\Omega, \eta)$ has a non-trivial a.c.\ sequence, no matter whether $C$ is finite or not.
We now assume that the action $\Gamma \c (X, \mu)$ is ergodic, and deduce Theorem \ref{thm-ce}.
Let $P\colon \Omega \to X$ denote the projection.
Let $\theta \colon (\Omega, \eta)\to (T, \xi)$ be the ergodic decomposition for the action $\Gamma \c (\Omega, \eta)$ and $\eta =\int_T \eta_t \, d\xi(t)$ the disintegration of $\eta$ with respect to $\theta$.
For $\xi$-a.e.\ $t\in T$, the measure $P_*\eta_t$ on $X$ is equal to $\mu$.
This claim is proved as follows:
By \cite[Theorem 3.2]{v}, we may assume that $X$ is a compact Polish space and $\Gamma$ acts on $X$ by homeomorphisms.
Pushing the measure $\xi$ out through the map sending $t\in T$ to $P_*\eta_t$, we obtain the probability measure $\bar{\xi}$ on the space of $\Gamma$-invariant probability measures on $X$, whose barycenter is $\mu$ because $P_*\eta =\mu$.
The measures $\mu$ and $P_*\eta_t$ with $\xi$-a.e.\ $t\in T$ are extreme points in that space, and by Bauer's characterization of extreme points (\cite[Proposition 1.4]{p}), $\bar{\xi}$ is supported on the single point $\mu$.
The claim follows.
It implies that for $\xi$-a.e.\ $t\in T$, the action $\Gamma \c (\Omega, \eta_t)$ is an extension of the action $\Gamma \c (X, \mu)$.
Applying Remark \ref{rem-1/2} and putting $(Z, \zeta)=(\Omega, \eta_t)$ for some $t\in T$, we obtain Theorem \ref{thm-ce}.




\end{document}